\tikzset{->-/.style={decoration={markings, mark=at position .5 with {\arrow{>}}}, postaction={decorate}}}
\tikzset{-<-/.style={decoration={markings, mark=at position .5 with {\arrow{<}}}, postaction={decorate}}}
\newcommand\id{\operatorname{id}} 
\newcommand\op{\mathrm{op}} 
\newcommand\obj{\operatorname{Obj}} 
\newcommand\mor{\operatorname{Mor}} 
\newcommand\dom{\operatorname{dom}} 
\newcommand\cod{\operatorname{cod}} 
\newcommand\reflects\bowtie 
\newcommand\C{\mathcal C} 
\newcommand\D{\mathcal D} 
\newcommand\R{\mathbb R} 
\newcommand\G{\mathcal G} 
\newcommand\TelCat{\mathbf{TelCat}} 
\newcommand\SymMonCat{\mathbf{SymMonCat}} 
\newcommand\Set{\mathbf{Set}} 
\renewcommand\Game{\mathbf{Game}} 
\newcommand\Lens{\mathbf{Lens}} 
\let\epsilon\varepsilon
\title{Coherence for lenses and open games}
\author{Jules Hedges}
\begin{document}

\maketitle

\begin{abstract}
	Categories of polymorphic lenses in computer science, and of open games in compositional game theory, have a curious structure that is reminiscent of compact closed categories, but differs in some crucial ways.
	Specifically they have a family of morphisms that behave like the counits of a compact closed category, but have no corresponding units; and they have a `partial' duality that behaves like transposition in a compact closed category when it is defined.
	We axiomatise this structure, which we refer to as a `teleological category'.
	We precisely define a diagrammatic language suitable for these categories, and prove a coherence theorem for them.
	This underpins the use of diagrammatic reasoning in compositional game theory, which has previously been used only informally.
\end{abstract}

\section{Introduction}

Open games \cite{hedges15b,hedges_towards_compositional_game_theory} are a foundation for economic game theory which is strongly compositional: games are built from simple components using uniform composition operators.
More specifically, open games are the morphisms of a symmetric monoidal category in which categorical composition is a primitive form of sequential play, and the monoidal product is a primitive form of simultaneous play.
Open games can be thought of as `games open along a boundary', and in particular ordinary (extensive-form) games arise as the scalars (endomorphisms of the monoidal unit \cite{abramsky05}) in the category of open games.
Compositional game theory thus shares the same motivation and philosophy as the more general research programme of open systems \cite{fong_algebra_open_interconnected_systems}.

Being morphisms in a monoidal category, open games support a diagrammatic notation.
It was noticed early that these diagrams have a curious structure intermediate between symmetric monoidal categories and compact closed categories \cite{selinger11}: wires can bend in one direction but not the other, and some (but not all) morphisms can be rotated around a bend.
Because of their formal differences to known classes of diagrams, they have previously been used only as an informal intuition.
This paper remedies this situation by defining these diagrams precisely and proving a coherence theorem for them.

In order to state a coherence theorem we must first axiomatise the properties of open games that are visible in the diagrammatic language.
To this end we define a class of monoidal categories called \emph{teleological categories}.
Thus the main goals of this paper are, firstly, to prove that the category of open games is a teleological category, and secondly, to prove that a suitable category of diagrams modulo equivalences is equivalent to the free teleological category over a signature.

It should be noted that there is no expectation that the diagrammatic language of open games is \emph{complete} in any useful sense.
That is to say, there are games that should be considered equivalent, whose diagrams are not equivalent up to deformation.
A simple example of this is the fact that games (with pure strategies) are invariant under monotone transformations of utility functions, that is to say, only the ordering of outcomes matters.
Even the classification of $2 \times 2$ bimatrix games up to equivalence is very involved \cite{goforth_robinson_topology_2x2_games}, and there is no accepted theory of equivalences of games in general.
Current work of the author on morphisms between open games, with an associated graphical language of surface diagrams, is an attempt to work towards this.
However, it cannot be ruled out that a useful theory of equivalences of open games could be axiomatised using only the 2-dimensional language of teleological categories, with sufficient ingenuity.

As a secondary goal we identity other examples of teleological categories, of which a particularly interesting example is a certain category of \emph{lenses} \cite{foster_etal_combinators_bidirectional_tree_transformations}, or more precisely, polymorphic lenses with no lens laws imposed.
There is a surprising and close relationship between open games and lenses, which arise in programming languages and database theory, in the study of `bidirectional transformations' (or `bx' for short) \cite{foster_three_complementary_approaches_bidirectional_programming}.
(This connection was noticed by Jeremy Gibbons.)
Indeed lenses can be used to factor and simplify the definition of open games: polymorphic lenses without lens laws turn out to be equivalent to the `strategically trivial' open games \cite[section 2.2.11]{hedges_towards_compositional_game_theory}, which could also be called zero-player open games.
This connection also reveals new facts about lenses: for example, effects (or co-states, that is, morphisms into the monoidal unit) in the monoidal category of lenses turn out to be continuations \cite[section 2.2.11]{hedges_towards_compositional_game_theory}.

The possibility of using string diagrams as a syntax for lenses was mentioned in \cite{hofmann_etal_symmetric_lenses} but does not appear to have been explored further.
(They are used superficially in \cite{saleh_etal_notions_bidirectional_computation_entangled_state_monads}, for example.)
A formalisation of these diagrams, at least for the notion of polymorphic lens used in this paper, follows from the coherence theorem for teleological categories.
In the resulting graphical language, a string bending around corresponds to a degenerate lens that focusses `outside' of a data structure, into the unit type.

Regarding the name, it is possible to view variants of monoidal categories as embodying different theories of causality.
In the simplest case of circuit diagrams for symmetric monoidal categories, an edge between an earlier node $\alpha$ and a later node $\beta$ represents a causal relationship where $\beta$ is in the `causal future' of $\alpha$.
More complicated variants such as compact closed and $\dag$-categories complicate this, and introduce a sort of `quantum causality' (see for example \cite{coecke_lal_causal_categories}).
Teleological categories are intended to similarly axiomatise Aristotle's final case or `telos' \cite{sep-aristotle-causality}, which is the causality due to agents (in a very loose sense) striving towards a goal.
Teleology is a key ingredient separating social science from physical science, and this is an attempt to put it on a proper mathematical foundation.
This viewpoint and terminology were originally suggested by Viktor Winschel in the context of open games.

\paragraph{Outline of the paper.} We begin in section \ref{sec:circuit-diagrams} by recalling the diagrammatic language and coherence theorem for symmetric monoidal categories.
The next two sections discuss the motivating examples: open games in section \ref{sec:open-games} and lenses in section \ref{sec:lenses}.
Section \ref{sec:teleological-categories} abstracts these to give the definition of a teleological category, and gives other examples.
Section \ref{sec:teleological-diagrams} defines the diagrammatic language for teleological categories, and in section \ref{sec:coherence} we state and prove the coherence theorem.

\paragraph{Acknowledgements.} I would like to thank Dan Marsden for discussions about diagrams and coherence, and Jeremy Gibbons for noticing that I was redeveloping the theory of lenses from scratch under a different name, and introducing me to the relevant literature.

\section{Circuit diagrams and string diagrams}\label{sec:circuit-diagrams}

In this paper we following the naming convention of Coecke, using the term `string diagram' to refer specifically to diagrams for compact closed categories, and refer to diagrams for monoidal categories as `circuit diagrams'.
(That name is in reference to the circuit model of quantum computation.)
Extending this, we will later introduce the term `teleological diagram' to refer to the intermediate notion suitable for teleological categories.
However this distinction is purely for clarity of exposition, and it is common to refer to all of these notions as `string diagrams' when there is no danger of ambiguity.

In this section we recall the formal theory of circuit diagrams.
We refer to \cite{joyal91} for a precise definition in terms of \emph{topological graphs with boundary}.
Roughly speaking, a diagram is a graph that is smoothly embedded in some $\R \times [a, b]$ (where the first coordinate is the `space direction' and the second the `time direction').
Nodes are partitioned into the `internal nodes', which are in $\R \times (a, b)$, and the `external nodes', which are in $\R \times \{ a, b \}$ and have the additional property that they are adjacent to exactly one edge.
The internal nodes correspond to ordinary nodes in the sense of a string diagram (which will be labelled by symbols for morphisms), while the external nodes represent the intersection of an edge with a boundary of the diagram.

\begin{definition}[\cite{joyal91,selinger11}]
	A \emph{monoidal signature} (also called a tensor scheme) $\Sigma$ consists of sets $\obj (\Sigma)$, $\mor (\Sigma)$ of \emph{object symbols} and \emph{morphism symbols}, together with, for each morphism symbol $f$, a pair of words $\dom (f), \cod (f)$ over $\obj (\Sigma)$.
	If $\dom (f) = x_1 \ldots x_m$ and $\cod (f) = y_1 \ldots y_n$ then we write
	\[ f : x_1 \otimes \cdots \otimes x_m \to y_1 \otimes \cdots \otimes y_n \]
	and we write $I$ for the empty word.
	 
	A \emph{valuation} $v$ of a monoidal signature $\Sigma$ in a monoidal category $\C$, written $v : \Sigma \to \C$, consists of a choice of object $v (x)$ of $\C$ for each object symbol $x$, and a choice of morphism $v (f)$ of $\C$ for each morphism symbol $f$, such that if $f : x_1 \otimes \cdots \otimes x_m \to y_1 \otimes \cdots \otimes y_n$ then
	\[ v (f) : v (x_1) \otimes \cdots \otimes v (x_m) \to v (y_1) \otimes \cdots \otimes v (y_n) \]
\end{definition}

\begin{definition}[\cite{joyal91}]
	Let $\Sigma$ be a monoidal signature.
	A \emph{circuit diagram} (also called a \emph{progressive polarised diagram}) over $\Sigma$ is a diagram in which edges are labelled by object symbols, and internal nodes are labelled by morphism symbols so that the labels of edges adjacent to an $f$-labelled node match $\dom (f)$ and $\cod (f)$.
	If $f : x_1 \otimes \cdots \otimes x_m \to y_1 \otimes \cdots \otimes y_n$ we draw an $f$-labelled node as
	\begin{center} \begin{tikzpicture}
		\node [rectangle, minimum height=2cm, minimum width=1cm, draw] (f) at (0, 0) {$f$};
		\node (x1) at (-1.5, -.5) {$x_1$}; \node at (-1.5, .1) {$\vdots$}; \node (xm) at (-1.5, .5) {$x_m$};
		\node (y1) at (1.5, -.5) {$y_1$}; \node at (1.5, .1) {$\vdots$}; \node (yn) at (1.5, .5) {$y_n$};
		\draw [-] (x1) to (f.west |- x1); \draw [-] (xm) to (f.west |- xm);
		\draw [-] (y1) to (f.east |- y1); \draw [-] (yn) to (f.east |- yn);
	\end{tikzpicture} \end{center}
	although this is only a typographical convention, and in reality nodes are points.
	Additionally, circuit diagrams must be \emph{progressive}: Edges cannot bend around in the time-direction, which can be formalised by requiring that each edge intersects each time-slice $\R \times \{ y \}$ of the diagram at most once.
\end{definition}

It will additionally be helpful to think of external nodes as labelled by a single distinguishing symbol that is not in $\mor (\Sigma)$, say $*$.

Every internal node in a circuit diagram $A$ determines a partition of its adjacent edges into input and output edges, and determines an ordering on both classes.
We write $\alpha : e_1 \otimes \cdots \otimes e_m \to e'_1 \otimes \cdots \otimes e'_n$, where $\alpha$ refers to an internal node and $e_i, e'_j$ to its adjacent edges (and not their labels).
The diagram itself also determines an ordering on its input and output external nodes, and we write $A : e_1 \otimes \cdots \otimes e_m \to e'_1 \otimes \cdots \otimes e'_n$, where the edges are those adjacent to the external nodes.
Two diagrams over the same signature which agree in this way on their common boundary can be composed by deleting the external nodes on that boundary and linking their adjacent edges into a single edge.
Diagrams can also be composed side-by-side, with no restrictions.

Equivalences of circuit diagrams can be defined in several equivalent ways \cite{joyal91,selinger11}.
In particular, there is a `geometric' definition in terms of isotopy (or continuous deformation), and also a discrete graph-theoretic definition.
We will explicitly spell out the discrete notion, which is known as isomorphism of diagrams, in order that we can later give a variation that is suitable for teleological categories.

\begin{definition}\label{def:circuit-diagram-isomorphism}
	Let $A$ and $B$ be circuit diagrams in $\R \times [a, b]$ over a monoidal signature $\Sigma$.
	An \emph{isomorphism} $i : A \cong_c B$ is a label-preserving bijection between the nodes of $A$ and $B$, and a label-preserving bijection between the edges of $A$ and $B$, both written $i$, such that
	\begin{itemize}
		\item For each internal node $\alpha$ of $A$, $\alpha : e_1 \otimes \cdots \otimes e_m \to e'_1 \otimes \cdots \otimes e'_n$ iff $i (\alpha) : i (e_1) \otimes \cdots \otimes i (e_m) \to i (e'_1) \otimes \cdots \otimes i (e'_n)$
		
		\item For each external node $\beta$ of $A$, $\beta$ is adjacent to edge $e$ iff $i (\beta)$ is adjacent to edge $i (e)$
		
		\item $A : e_1 \otimes \cdots \otimes e_m \to e'_1 \otimes \cdots \otimes e'_n$ iff $B : i (e_1) \otimes \cdots \otimes i (e_m) \to i (e'_1) \otimes \cdots \otimes i (e'_n)$
	\end{itemize}
\end{definition}

$\cong_c$-equivalence classes of circuit diagrams over $\Sigma$ form the morphisms of a strict symmetric monoidal category $\D_c (\Sigma)$, whose objects are words over $\obj (\Sigma)$.
This category carries an obvious valuation $v_\Sigma : \Sigma \to \D_c (\Sigma)$.

\begin{theorem}[Joyal-Street coherence theorem \cite{joyal91,selinger11}]\label{joyal-street-theorem}
	Let $\Sigma$ be a monoidal signature, $\C$ a symmetric monoidal category and $w : \Sigma \to \C$ a valuation.
	Then there exists a symmetric monoidal functor $F : \D_c (\Sigma) \to \C$ such that $w = F \circ v_\Sigma$, unique up to unique monoidal natural isomorphism.
\end{theorem}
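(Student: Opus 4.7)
The plan is to construct $F$ by first defining it on objects, then on morphisms via a decomposition of each circuit diagram into elementary horizontal slices, and finally to verify well-definedness using MacLane's coherence theorem for symmetric monoidal categories. On objects, since the objects of $\D_c(\Sigma)$ form the free monoid on $\obj(\Sigma)$, I set $F(x_1 \cdots x_n) := w(x_1) \otimes \cdots \otimes w(x_n)$ with a fixed (say left-associated) bracketing, and $F(I)$ the monoidal unit of $\C$. For morphisms, given a circuit diagram $A : e_1 \otimes \cdots \otimes e_m \to e'_1 \otimes \cdots \otimes e'_n$, progressiveness lets me pick times $a = t_0 < t_1 < \cdots < t_k = b$ so that each strip $\R \times [t_i, t_{i+1}]$ meets at most one internal node or one transverse crossing of two edges. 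Each such strip is, up to its space-ordering, a tensor product of identities, a single symmetry, or an $f$-labelled node; I interpret it in $\C$ using $w(f)$, the identity, and the symmetry of $\C$ respectively, and define $F(A)$ as the composite of these, conjugated by whatever canonical coherence isomorphisms are needed to realign bracketings.

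The main obstacle, and the technical heart of the proof, is to show that this interpretation does not depend on any of the arbitrary choices made --- the slicing, the transverse ordering of events at a common height, or the bracketing convention --- so that it descends to $\cong_c$-classes. The plan is to exhibit any two admissible slicings of a fixed diagram as connected by a finite sequence of local moves: insertion or deletion of an identity-only slice, exchange of the vertical order of two events supported on disjoint space-intervals, and collapse of two consecutive crossings of the same pair of wires into an identity. Each move preserves the $\C$-interpretation by, respectively, unit laws, the interchange law for $\otimes$, and the self-inverse naturality of the symmetry. Changes in transverse arrangement at a single height reduce to inserting additional crossings governed by naturality of the symmetry, and changes of bracketing convention reduce to conjugation by associators and unitors; all such reassemblies yield equal morphisms in $\C$ by MacLane coherence for symmetric monoidal categories. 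Once this is in hand, checking that $F$ respects categorical composition and tensor of diagrams is direct from the construction, and the equation $F \circ v_\Sigma = w$ holds by definition on the generating nodes.

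For uniqueness up to unique monoidal natural isomorphism, I would argue that $\D_c(\Sigma)$ is generated under composition, tensor, identities and symmetries by the image of $v_\Sigma$ --- any circuit diagram literally decomposes this way via the slicing above --- so any two symmetric monoidal functors $F, F' : \D_c(\Sigma) \to \C$ extending $w$ must agree on generators. On objects the two may still differ because the extending functor has freedom in how it brackets tensor products of more than two letters; a monoidal natural isomorphism $\eta : F \Rightarrow F'$ is then forced at each word to be the unique coherence isomorphism in $\C$ between the two bracketings of the same sequence of $w(x_i)$'s, and this unique choice automatically satisfies naturality and the monoidality conditions, again by coherence in $\C$. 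The only genuinely hard step throughout is the well-definedness argument in the second paragraph; this is essentially a combinatorial repackaging of the topological isotopy argument of Joyal--Street, with MacLane coherence handling the residual categorical bookkeeping.
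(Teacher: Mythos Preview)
The paper does not give its own proof of this statement: it is quoted as a known result of Joyal and Street (with Selinger's survey as a secondary reference) and is used later as a black box in the proof of Theorem~\ref{main-theorem}. So there is no in-paper proof to compare against.

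Your sketch is a recognisable outline of the standard Joyal--Street argument and is broadly sound at the level of a plan. Two points are worth tightening if you intend this to stand as an actual proof. First, before slicing you need the diagram in \emph{general position} (no two internal nodes or crossings sharing a height, and nodes not at the same height as a tangency of another edge); this is harmless up to a small isotopy but should be said, since otherwise ``at most one internal node or one transverse crossing per strip'' can fail. Second, your list of local moves is incomplete: besides the interchange and symmetry-involution moves you name, you also need the move that slides a crossing past an adjacent node (naturality of $\sigma$) and the move that reorders three mutually crossing strands (the hexagon/Yang--Baxter relation). These are exactly the identifications that the symmetric-monoidal axioms of $\C$ validate, and without them two slicings of the same $\cong_c$-class need not be connected by your moves. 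With those additions, the invariance argument goes through, and your treatment of uniqueness via Mac\,Lane coherence on the object-bracketing is the standard one.
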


The functor $F$ is the `obvious' extension of the valuation $w$ from a signature to its category of diagrams, by computing compositionally.
The importance of this theorem is that it proves that this informal definition of $F$ is in fact well-defined, in the sense of being invariant up to equivalences of circuit diagrams.

\section{Open games}\label{sec:open-games}

Open games provide the motivating example of a teleological category.
A detailed understanding of open games is not necessary to read this paper, however, and so this section will provide a non-technical introduction.
We will also informally introduce the diagrammatic notation for open games as it has previously been used, which will be formalised in sections \ref{sec:teleological-diagrams} and \ref{sec:coherence}.
For interested readers, the most detailed exposition of open games can be found in \cite{hedges_towards_compositional_game_theory}.

Let $X, Y, R, S$ be nonempty sets.
(The requirement of nonemptiness does not appear in earlier references, but can be added harmlessly.)
By definition, an open game $\G : (X, S) \to (Y, R)$ consists of the following data:
\begin{itemize}
	\item A set $\Sigma_\G$ of \emph{strategy profiles}
	\item A \emph{play function} $\mathbf P_\G : \Sigma_\G \times X \to Y$
	\item A \emph{coplay function} $\mathbf C_\G : \Sigma_\G \times X \times R \to S$
	\item A \emph{best response} function $\mathbf B_\G : X \times (Y \to R) \to (\Sigma_\G \to \mathcal P (\Sigma_\G))$
\end{itemize}
A general open game is depicted in the diagrammatic language as
\begin{center} \begin{tikzpicture}
	\node (X) at (-2, -.5) {$X$}; \node (Y) at (2, -.5) {$Y$}; \node (R) at (2, .5) {$R$}; \node (S) at (-2, .5) {$S$};
	\node [rectangle, minimum height=2cm, minimum width=1cm, draw] (G) at (0, 0) {$\G$};
	\draw [->-] (X) to (G.west |- X); \draw [->-] (G.east |- Y) to (Y); \draw [->-] (R) to (G.east |- R); \draw [->-] (G.west |- S) to (S);
\end{tikzpicture} \end{center}

There is a symmetric monoidal category $\Game$ whose objects are pairs of nonempty sets, and whose morphisms are equivalence classes of open games after quotienting by compatible isomorphisms of strategy sets.
(More generally, open games form a symmetric monoidal bicategory whose 2-cells are compatible morphisms between strategy sets.)

We will introduce three particular families of examples.
It would take us too far afield to fully define them and discuss the intuition behind the definitions, so see \cite[sections 2.1.7 -- 2.1.9]{hedges_towards_compositional_game_theory} for this.

As a first example, there is a family of open games $\mathcal D : (X, 1) \to (Y, \mathbb R)$ called \emph{decisions}, which represent a single player making a single utility-maximising decision.
This is denoted by one of the diagrams
\begin{center} \begin{tikzpicture}
	\node (X) at (-2, 0) {$X$}; \node (Y1) at (2, -.5) {$Y$}; \node (R1) at (2, .5) {$\mathbb R$};
	\node [rectangle, minimum height=2cm, minimum width=1cm, draw] (D1) at (0, 0) {$\mathcal D$};
	\draw [->-] (X) to (D1); \draw [->-] (D1.east |- Y1) to (Y1); \draw [->-] (R1) to (D1.east |- R1);
	\node (Y2) at (8, -.5) {$Y$}; \node (R2) at (8, .5) {$\mathbb R$};
	\node [isosceles triangle, isosceles triangle apex angle=90, shape border rotate=180, minimum width=2cm, draw] (D2) at (6, 0) {$\mathcal D$};
	\draw [->-] (D2.east |- Y2) to (Y2); \draw [->-] (R2) to (D2.east |- R2);
\end{tikzpicture} \end{center}
with the second being the case $X = 1$, when the player makes no observation, for example in a bimatrix game.
(Triangles are commonly used to denote morphisms into or out of the monoidal unit in a diagram, but this is purely a typographical convention.)

A second family of examples are \emph{computations}, which lift functions into the category of games.
A function $f : X \to Y$, for $X$, $Y$ nonempty sets, has two associated computations: a \emph{covariant} one $f : (X, 1) \to (Y, 1)$, and a \emph{contravariant} one $f^* : (1, Y) \to (1, X)$.
These are respectively denoted
\begin{center} \begin{tikzpicture}
	\node (X1) at (0, 0) {$X$}; \node (Y1) at (4, 0) {$Y$};
	\node [trapezium, trapezium left angle=0, trapezium right angle=75, shape border rotate=90, trapezium stretches=true, minimum height=1cm, minimum width=2cm, draw] (f1) at (2, 0) {$f$};
	\draw [->-] (X1) to (f1); \draw [->-] (f1) to (Y1);
	\node (X2) at (12, 0) {$X$}; \node (Y2) at (8, 0) {$Y$};
	\node [trapezium, trapezium left angle=75, trapezium right angle=0, shape border rotate=270, trapezium stretches=true, minimum height=1cm, minimum width=2cm, draw] (f2) at (10, 0) {$f$};
	\draw [->-] (X2) to (f2); \draw [->-] (f2) to (Y2);
\end{tikzpicture} \end{center}

As the final example, for every set $X$ there is an open game $\epsilon_X : (X, X) \to (1, 1)$ called a \emph{counit}, which is denoted by
\begin{center} \begin{tikzpicture}
	\node (X1) at (0, 0) {$X$}; \node (X2) at (0, 2) {$X$};
	\draw [->-] (X1) to [out=0, in=-90] (1.25, 1) to [out=90, in=0] (X2);
\end{tikzpicture} \end{center}
Covariant computations, contravariant computations and counits are related by the \emph{counit law} \cite[sections 2.2.13 and 2.3.6]{hedges_towards_compositional_game_theory}, which states that for every function $f : X \to Y$, the diagram
\begin{center} \begin{tikzpicture}[node distance=3cm, auto]
	\node (A) {$(X, Y)$}; \node (B) [right of=A] {$(Y, Y)$};
	\node (C) [below of=A] {$(X, X)$}; \node (D) [below of=B] {$(1, 1)$};
	\draw [->] (A) to node {$f \otimes (1, Y)$} (B); \draw [->] (A) to node {$(X, 1) \otimes f^*$} (C);
	\draw [->] (B) to node {$\epsilon_Y$} (D); \draw [->] (C) to node {$\epsilon_X$} (D);
\end{tikzpicture} \end{center}
commutes.
In diagrammatic notation, this is the equation
\begin{center} \begin{tikzpicture}
	\node (X1) at (0, 0) {$X$}; \node (Y1) at (0, 2) {$Y$};
	\node [trapezium, trapezium left angle=0, trapezium right angle=75, shape border rotate=90, trapezium stretches=true, minimum height=1cm, minimum width=2cm, draw] (f1) at (2, 0) {$f$};
	\draw [->-] (X1) to (f1); \draw [->-] (f1) to [out=0, in=-90] (4, 1) to [out=90, in=0] (2, 2) to (Y1);
	\node at (6, 1) {$=$};
	\node (X2) at (8, 0) {$X$}; \node (Y2) at (8, 2) {$Y$};
	\node [trapezium, trapezium left angle=75, trapezium right angle=0, shape border rotate=270, trapezium stretches=true, minimum height=1cm, minimum width=2cm, draw] (f2) at (10, 2) {$f$};
	\draw [->-] (X2) to (10, 0) to [out=0, in=-90] (12, 1) to [out=90, in=0] (f2); \draw [->-] (f2) to (Y2);
\end{tikzpicture} \end{center}

A \emph{scalar} in a monoidal category is an endomorphism of the monoidal unit \cite{abramsky05}.
The monoidal unit in the category of open games is $(1, 1)$, and therefore a scalar $\G$ consists of a set $\Sigma_\G$ and a best response function $\mathbf B_\G : \Sigma_\G \to \mathcal P (\Sigma_\G)$.

Such scalars describe a useful description of \emph{games} in game theory, where $\mathbf B_\G (\sigma)$ is the set of all rational \emph{unilateral deviations} $\sigma'$ from a strategy profile $\sigma$.
A unilateral deviation is a strategy profile $\sigma'$ that differs from $\sigma$ in the strategy of at most one player $i$, such that the payoff for player $i$ after playing $\sigma'$ is at least as large as that from playing $\sigma$.
Fixpoints of the best response function (in the sense that $\sigma \in \mathbf B_\G (\sigma)$) are called \emph{Nash equilibria}, and are strategies with the property that no rational player has incentive to unilaterally deviate from playing $\sigma$; Nash equilibria are `stable' or `self-confirming' strategies.
The other values of $\mathbf B_\G (\sigma)$ are called \emph{off-equilibrium best responses}, and are also characteristic of a game.
Although there is no generally accepted notion of equivalence of games, equality of best response functions provides a useful notion that works `across theories', for example between open games and the classical formalism of von Neumann \cite{vonneumann44}.

By composing open games from the three families we have introduced using categorical composition and tensor product, we can build scalars with the same best response functions as various standard games.
We will give three examples, each of which has two players who choose from sets $X$ and $Y$ respectively, with payoffs given by a function $u : X \times Y \to \mathbb R^2$.
Figure \ref{fig:example-games}(a) shows a game where two players make the choices simultaneously (a well-known example being rock-paper-scissors).

Figure \ref{fig:example-games}(b) shows a `game of perfect information', where the second player observes the first player's choice before making her own choice.
(An example of a game of perfect information is chess, although it has more than two stages.)
Notice the use of a comonoid operator $X \to X \otimes X$, which is the copy function lifted as a covariant computation, to copy the choice of the first player, which is both observed by the second player and used by the utility function.

Figure \ref{fig:example-games}(c) is a `game of imperfect information', intermediate between the previous two examples (an example being poker, which has both visible and private information).
There is an equivalence relation on the choices made by the first player, and the second player observes only the equivalence class.
(The function $\pi_\sim : X \to X / \sim$ is the projection onto equivalence classes.)

\begin{figure}\label{fig:example-games}
	\begin{center}
		\begin{center} \begin{tikzpicture}
			\node [isosceles triangle, isosceles triangle apex angle=90, shape border rotate=180, minimum width=2cm, draw] (D1) at (0, 0) {$\mathcal D_1$};
			\node [isosceles triangle, isosceles triangle apex angle=90, shape border rotate=180, minimum width=2cm, draw] (D2) at (0, 3) {$\mathcal D_2$};
			\node [trapezium, trapezium left angle=0, trapezium right angle=75, shape border rotate=90, trapezium stretches=true, minimum height=1cm, minimum width=2cm, draw] (U) at (3, 1.5) {$u$};
			\node (d1) at (0, -.5) {}; \node (d2) at (0, .5) {}; \node (d3) at (0, 2.5) {}; \node (d4) at (0, 3.5) {}; \node (d5) at (0, 1) {}; \node (d6) at (0, 2) {};
			\draw [->-] (D1.east |- d1) to [out=0, in=180] node [above, near end] {$X$} (U.west |- d5);
			\draw [->-] (D2.east |- d3) to [out=0, in=180] node [above] {$Y$} (U.west |- d6);
			\draw [->-] (U.east |- d5) to [out=0, in=90] (4, .5) to [out=-90, in=0] node [below] {$\mathbb R$} (D1.east |- d2);
			\draw [->-] (U.east |- d6) to [out=0, in=-90] (4, 2.5) to [out=90, in=0] node [above] {$\mathbb R$} (D2.east |- d4);
		\end{tikzpicture} \end{center}
		(a) Simultaneous game
	
		\begin{center} \begin{tikzpicture}[yscale=-1]
			\node [isosceles triangle, isosceles triangle apex angle=90, shape border rotate=180, minimum width=2cm, draw] (D1) at (2, 0) {$\mathcal D_1$};
			\node [circle, scale=.5, fill=black, draw] (m1) at (4, .5) {};
			\node [rectangle, minimum height=2cm, minimum width=1cm, draw] (D2) at (6.5, 0) {$\mathcal D_2$};
			\node [trapezium, trapezium left angle=0, trapezium right angle=75, shape border rotate=90, trapezium stretches=true, minimum height=1cm, minimum width=2cm, draw] (U) at (9, 1) {$u$};
			\node (d1) at (0, -.5) {}; \node (d2) at (0, 1.5) {};
			\draw [->-] (D1.east |- m1) to node [above] {$X$} (m1);
			\draw [->-] (m1) to [out=45, in=180] node [above, very near end] {$X$} (U.west |- d2);
			\draw [->-] (m1) to [out=-45, in=180] (D2);
			\draw [->-] (D2.east |- m1) to node [above] {$Y$} (U.west |- m1);
			\draw [->-] (U.east |- m1) to [out=0, in=90] (10, 0) to [out=-90, in=0] node [above, near end] {$\R$} (D2.east |- d1);
			\draw [->-] (U.east |- d2) to [out=0, in=90] (11, 0) to [out=-90, in=0] node [above, near end] {$\R$} (6.5, -1.5) to [out=180, in=0] (D1.east |- d1);
		\end{tikzpicture} \end{center}
		(b) Sequential game of perfect information
	
		\begin{center} \begin{tikzpicture}[yscale=-1]
			\node [isosceles triangle, isosceles triangle apex angle=90, shape border rotate=180, minimum width=2cm, draw] (D1) at (0, 0) {$\mathcal D_1$};
			\node [circle, scale=.5, fill=black, draw] (m1) at (2, .5) {};
			\node [trapezium, trapezium left angle=0, trapezium right angle=75, shape border rotate=90, trapezium stretches=true, minimum height=1cm, minimum width=2cm, draw] (p) at (4, 0) {$\pi_\sim$};
			\node [rectangle, minimum height=2cm, minimum width=1cm, draw] (D2) at (6.5, 0) {$\mathcal D_2$};
			\node [trapezium, trapezium left angle=0, trapezium right angle=75, shape border rotate=90, trapezium stretches=true, minimum height=1cm, minimum width=2cm, draw] (U) at (9, 1) {$u$};
			\node (d1) at (0, -.5) {}; \node (d2) at (0, 1.5) {};
			\draw [->-] (D1.east |- m1) to node [above] {$X$} (m1);
			\draw [->-] (m1) to [out=45, in=180] node [above, very near end] {$X$} (U.west |- d2);
			\draw [->-] (m1) to [out=-45, in=180] (p);
			\draw [->-] (p) to node [above] {$X / {\sim}$} (D2);
			\draw [->-] (D2.east |- m1) to node [above] {$Y$} (U.west |- m1);
			\draw [->-] (U.east |- m1) to [out=0, in=90] (10, 0) to [out=-90, in=0] node [above, near end] {$\R$} (D2.east |- d1);
			\draw [->-] (U.east |- d2) to [out=0, in=90] (11, 0) to [out=-90, in=0] node [above, near end] {$\R$} (6.5, -2) to [out=180, in=0] (D1.east |- d1);
		\end{tikzpicture} \end{center}
		(c) Sequential game of imperfect information
	
		\caption{Examples of scalar open games}
	\end{center}
\end{figure}

In each of these examples (and in the generalisation of them to any finite number of players), the best response function of the scalar is equal to the best response function for ordinary Nash equilibrium in standard game theory.
In this sense, we have the expressive power of ordinary game theory, but in a compositional setting.

\section{Lenses}\label{sec:lenses}

In this section we construct a category $\Lens$ whose objects are pairs $(X, S)$ of nonempty sets, and whose morphisms are called \emph{lenses}.
The category $\Lens$ has several features in common with $\Game$ and will provide a second important example of a teleological category.

\begin{definition}
	Let $X, S, Y, R$ be nonempty sets.
	A lens $\lambda : (X, S) \to (Y, R)$ consists of a pair of functions $\lambda = (v_\lambda, u_\lambda)$ where $v_\lambda : X \to Y$ and $u_\lambda : X \times R \to S$.
\end{definition}

We think of these types as follows: $X$ is the type of some data structure, and we can use the lens $\lambda$ to `zoom in' on some part of the data structure, which has type $Y$.
This is done by the function $v_\lambda$, which is mnemonic for `view'.
The data that we are viewing can then be updated, so that it now has type $R$, and the lens propagates this update to the aggregate whole, whose type changes to $S$.
This is done by the function $u_\lambda$ (for `update'), which takes the original data structure and the updated value, and returns the new state of the data structure.
As a simple example, consider a lens $\lambda$ that zooms in on the first coordinate of a pair of integers, and allows replacing it with a boolean instead.
This lens has type $\lambda : (\mathbb Z \times \mathbb Z, \mathbb B \times \mathbb Z) \to (\mathbb Z, \mathbb B)$, and is given by $v_\lambda (z_1, z_2) = z_1$ and $u_\lambda ((z_1, z_2), b) = (b, z_2)$.

The identity lens $\id_{(X, S)} : (X, S) \to (X, S)$ is given by $v_{\id_{(X, S)}} = \id_X : X \to X$ and $u_{\id_{(X, S)}} = \pi_2 : X \times S \to S$.
The composition of $\lambda : (X, S) \to (Y, R)$ and $\mu : (Y, R) \to (Z, Q)$ is given by $v_{\mu \circ \lambda} = v_\mu \circ v_\lambda$ and $u_{\mu \circ \lambda} (x, q) = u_\lambda (x, u_\mu (v_\lambda (x), q))$.

\begin{proposition}
	With these definitions, $\Lens$ is a category.
\end{proposition}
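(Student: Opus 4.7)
The plan is to verify directly that the given composition rule and the identity lens satisfy the two categorical axioms. Since a lens is determined by a pair $(v_\lambda, u_\lambda)$, every equality between lenses splits into an equality of view components and an equality of update components, which I would check independently. A preliminary but trivial step is to confirm that the proposed composite has the correct type: tracing $v_\lambda(x) \in Y$, then $u_\mu(v_\lambda(x), q) \in R$, then $u_\lambda(x, -) \in S$ shows that $u_{\mu \circ \lambda}$ has type $X \times Q \to S$ as required.

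For the unit laws, the view components reduce immediately to unitality of ordinary function composition in $\Set$. For the update components, the definition $u_{\id_{(X,S)}} = \pi_2$ is precisely what is needed to absorb the identity on either side: when $\id$ is inserted on the right, the update collapses via $u_\lambda(x, \pi_2(v_\lambda(x), r)) = u_\lambda(x, r)$, and when $\id$ is inserted on the left one uses $\pi_2(x, u_\mu(x, r)) = u_\mu(x, r)$ together with $v_{\id} = \id_X$.

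For associativity, fix composable lenses $\lambda : (X, S) \to (Y, R)$, $\mu : (Y, R) \to (Z, Q)$, $\nu : (Z, Q) \to (W, P)$. The view parts of both $(\nu \circ \mu) \circ \lambda$ and $\nu \circ (\mu \circ \lambda)$ simplify to $v_\nu \circ v_\mu \circ v_\lambda$ by associativity of function composition. For the update parts, I would unfold each bracketing according to the composition rule; after substituting the intermediate views and updates, both sides collapse to the single nested expression $u_\lambda(x, u_\mu(v_\lambda(x), u_\nu(v_\mu(v_\lambda(x)), p)))$.

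The whole verification is routine; no genuine obstacle is expected. The only point requiring attention is bookkeeping when unfolding the nested compositions: the symbols $v$ and $u$ are reused for every lens, and the four types $S, R, Q, P$ flow in the opposite direction from $X, Y, Z, W$, so it is easy to mis-apply an update at the wrong stage if one is not careful to label each intermediate value with the lens it comes from.
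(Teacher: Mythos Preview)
Your proposal is correct; the paper states this proposition without proof, treating it as a routine verification, and your direct check of the unit laws and associativity on the view and update components is exactly the expected argument. There is nothing to add beyond noting that the paper omits the details you have supplied.
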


The description of lenses given here is essentially that of `concrete lenses' in \cite{pickering_gibbons_wu_profunctor_optics}, which differs slightly from usual presentation of lenses.
(The only difference between their definition and ours is the requirement that the sets are nonempty, which will be discussed later.)
The simplest lenses are `monomorphic' lenses, which do not allow updates to change the type of data.
The category of monomorphic lenses is the full subcategory of $\Lens$ whose objects have the form $(X, X)$.
Lenses are also usually considered to satisfy axioms called the lens laws, that specify intuitive properties of bidirectional transformations on data; lenses are classified as `well-behaved' or `very well-behaved', with most theoretical work focussing on the very well-behaved case.
These form a hierarchy of subcategories of $\Lens$ \cite{johnson_rosebrugh_spans_lenses}.
Lenses which can change types are generally considered polymorphic (that is to say, they can change to any type), and exist in a type system with parametric polymorphism, in which case the lens laws imply certain constraints on the types \cite[part 4]{kazak-lenses-over-tea}.
In this formulation, we consider lenses which in general not only fail to satisfy the lens laws, but for which the lens laws do not even type-check.\footnote{Cezar Ionescu has suggested calling these `outlaw lenses'.}
We choose this formulation because of its application to game theory; to the author's knowledge this is the first application of lenses that do not represent bidirectional transformations of data.

The category of lenses can be given a symmetric monoidal structure, with monoidal unit $I = (1, 1)$.
The monoidal product on objects is $(X, S) \otimes (X', S') = (X \times X', S \times S')$.
On lenses it is given by $v_{\lambda \otimes \mu} (x, x') = (v_\lambda (x), v_\mu (x'))$ and $u_{\lambda \otimes \mu} ((x, x'), (r, r')) = (u_\lambda (x, r), u_\mu (x', r'))$.
(In Edward Kmett's popular \texttt{Control.Lens} library for Haskell, the monoidal product is called \texttt{alongside}.)

\begin{proposition}
	With these definitions, $\Lens$ is a symmetric monoidal category.
\end{proposition}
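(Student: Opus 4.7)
The plan is to verify the axioms of a symmetric monoidal category by lifting the cartesian structure of $(\Set, \times)$ to $\Lens$, equipping each structural isomorphism with the ``pure rebuild'' update component that discards the original data, in the same spirit as the identity lens $u_{\id_{(X,S)}} = \pi_2$.

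First I would verify that $\otimes$ is a bifunctor. Preservation of identities is a one-line componentwise check, and preservation of composition unpacks to $v_{(\mu' \otimes \nu') \circ (\mu \otimes \nu)} = v_{(\mu' \circ \mu) \otimes (\nu' \circ \nu)}$ and the corresponding identity for $u$, both of which reduce to routine rearrangements of cartesian tuples in $\Set$. Next, I would exhibit the associator, left and right unitors, and the braiding. For the associator, set
\[ v_{\alpha_{(X,S),(X',S'),(X'',S'')}}((x,x'),x'') = (x,(x',x'')) \]
and let the update component send any first argument together with $(s,(s',s''))$ to $((s,s'),s'')$. Define $\lambda_{(X,S)}$ and $\rho_{(X,S)}$ analogously, with view components the unit isomorphisms on $\Set$ and update components similarly independent of the first argument. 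The braiding $\sigma_{(X,S),(X',S')}$ has view the swap on $X \times X'$ and update the swap on $S' \times S$.

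Third, I would verify naturality and invertibility. Invertibility is immediate because each view component is a set-theoretic bijection with explicit inverse, and each update component is the corresponding inverse bijection (up to discarding the first argument). Naturality in the view slot is precisely naturality of the corresponding isomorphism in $(\Set, \times)$. Naturality in the update slot simplifies because the structural update components ignore their first argument, so the ``view-then-update'' nesting that appears in lens composition collapses to a plain rearrangement on the $S$-components alone.

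Finally, the coherence conditions --- pentagon, triangle, hexagon, and $\sigma^2 = \id$ --- reduce componentwise to the corresponding coherence equations in $(\Set, \times)$. The view equations are literally those axioms; the update equations are their mirror images, because lens composition threads update data in the opposite direction. The main obstacle is purely bookkeeping: each axiom expands into two set-theoretic equations of opposite handedness, and one must keep track of how views on one side feed into updates on the other. Once the convention that \emph{all} structural lenses have update components independent of the first argument is fixed, the threaded view computations are irrelevant, so every axiom reduces cleanly to two routine instances of the corresponding fact about cartesian product of sets.
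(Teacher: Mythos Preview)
Your proposal is correct: the direct verification works, and the key simplification you identify---that all structural lenses are adaptors, i.e.\ have update components independent of the first argument---is exactly what makes each coherence axiom collapse to a pair of equations in $(\Set,\times)$.

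The paper does not actually spell out a proof. It remarks that one can either prove the proposition directly (which is what you do) or via more abstract machinery: the view functor $v:\Lens\to\Set_1$ is a monoidal fibration, fibrewise opposite to Jacobs' simple fibration, and the symmetric monoidal structure on the total category can be recovered from the fibres by a Grothendieck construction (citing Shulman). Your route is elementary and self-contained; the paper's alternative buys a conceptual explanation of \emph{why} the monoidal structure exists (it is inherited from an indexed comonad / fibration), and generalises immediately to $\Lens(\C)$ over any category $\C$ with finite products, but at the cost of importing heavier external results.
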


This can either be proven directly, or using the following more abstract machinery.
Viewing lenses constitutes a monoidal fibration $v : \Lens \to \Set_1$, where $\Set_1$ is the category of nonempty sets.
(Note that the nonemptiness requirement is unnecessary in this paragraph, and indeed we can build a category of lenses $\Lens (\C)$ over any category $\C$ with finite products, obtaining a monoidal fibration $v : \Lens (\C) \to \C$; nonemptiness will be used later.)
This is reminiscent of \cite{johnson_etal_lenses_fibrations_universal_transformations}, but the following additional observations seem to be new.
The fibration $v$ is the opposite (in the fibrewise sense) of another that arises in a different context, namely, Jacobs' `simple fibration' $s (\Set_1)$ \cite[section 1.3]{jacobs-categorical-logic-type-theory}.
The fibre in $\Lens$ over $X$ is $\operatorname{co-kl} (X \times)^\op$, the opposite of the co-kleisli category of the left-multiplication-by-$X$ comonad (sometimes called the `reader comonad').
The entire structure can be built starting from the fact that $(X \times)$ is an indexed commutative comonad, or even more basically, from commutative comonoids using \cite[proposition 2.1]{pavlovic_geometry_abstraction_quantum_computation}, 
and the symmetric monoidal structure on $\Lens$ can be obtained from the fibres using a Grothendieck construction \cite[theorem 12.7]{shulman_framed_bicategories_monoidal_fibrations}.

This abstract point of view also serves to clarify another connection (implicit in \cite[section 2.2.14]{hedges_towards_compositional_game_theory}), between lenses and de Paiva's intuitionistic dialectica categories \cite{depaiva91a}, using \cite[exercise 1.10.11]{jacobs-categorical-logic-type-theory}.
However, the additional abstraction is unnecessary in this paper, so we will continue with the concrete presentation.

Next, we observe that there are lenses corresponding to computations and counits in the category of open games, and they satisfy a similar `counit law'.

\begin{definition}
	An \emph{adaptor} is a lens $\lambda : (X, S) \to (Y, R)$ whose update function factorises as $X \times R \xrightarrow{\pi_2} R \to S$.
\end{definition}

As bidirectional transformations, adaptors convert between data representations in a reversible way.
Adaptors are usually called `isos', but are only guaranteed to be categorical isomorphisms if we restrict to well-behaved lenses, so we follow the terminology of \cite{pickering_gibbons_wu_profunctor_optics} for clarity.

\begin{proposition}
	Adaptors form a wide symmetric monoidal subcategory $\Lens_d$.
\end{proposition}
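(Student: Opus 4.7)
The plan is to unpack the definition of an adaptor and verify each of the closure properties in turn. Writing $\lambda = (v_\lambda, u_\lambda)$, the defining condition says that $u_\lambda = g_\lambda \circ \pi_2$ for some $g_\lambda : R \to S$, so an adaptor is determined by a pair of functions $v_\lambda : X \to Y$ and $g_\lambda : R \to S$ in the opposite direction; this reformulation keeps the computations transparent.

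First I would verify that $\Lens_d$ is a subcategory. For the identity on $(X, S)$, $u_{\id_{(X, S)}} = \pi_2 : X \times S \to S$, which is manifestly $\id_S \circ \pi_2$. For composition, given adaptors $\lambda : (X, S) \to (Y, R)$ and $\mu : (Y, R) \to (Z, Q)$, the formula for $u_{\mu \circ \lambda}$ gives $u_{\mu \circ \lambda}(x, q) = u_\lambda(x, u_\mu(v_\lambda(x), q)) = g_\lambda(g_\mu(q))$, which factorises as $\pi_2$ followed by $g_\lambda \circ g_\mu$. So $\mu \circ \lambda$ is an adaptor, with $v_{\mu \circ \lambda} = v_\mu \circ v_\lambda$ and $g_{\mu \circ \lambda} = g_\lambda \circ g_\mu$. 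Wideness is immediate because all identities are adaptors.

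Next I would show that the symmetric monoidal structure restricts to $\Lens_d$. For the tensor of adaptors $\lambda, \mu$, compute $u_{\lambda \otimes \mu}((x,x'),(r,r')) = (g_\lambda(r), g_\mu(r'))$, which factors through the second projection; hence $\lambda \otimes \mu$ is an adaptor. The structural isomorphisms (associator, left and right unitors, and symmetry) on $\Lens$ have view components inherited from $\Set_1$, and their update components are, by construction, built out of projections and permutations of the second-coordinate data, so each factorises through $\pi_2$ in its first argument. This can be checked by direct inspection; for instance, the symmetry on $(X, S) \otimes (X', S')$ has update component $((x, x'), (r', r)) \mapsto (r, r')$, which is $\pi_2$ followed by the swap $S' \times S \to S \times S'$.

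The main (minor) obstacle is simply the bookkeeping of verifying all the structural morphisms explicitly; once this is done, the axioms of a symmetric monoidal category follow automatically in $\Lens_d$ by restriction from $\Lens$, since $\Lens_d$ sits inside $\Lens$ as a full-on-morphisms closed collection and the coherence equations are equations between morphisms that already hold in the ambient category.
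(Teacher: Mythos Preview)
Your proposal is correct and follows exactly the approach the paper indicates: the paper's proof is the single sentence ``It is straightforward to check that $\Lens_d$ is closed under composition and monoidal product, and that identities and structure morphisms are adaptors,'' and you have simply carried out those straightforward checks explicitly.
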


\begin{proof}
	It is straightforward to check that $\Lens_d$ is closed under composition and monoidal product, and that identities and structure morphisms are adaptors.
\end{proof}

Given functions $f : X \to Y$ and $g : R \to S$, we write $(f, g) : (X, S) \to (Y, R)$ for the unique adaptor satisfying $v_{(f, g)} = f$ and $u_{(f, g)} = g \circ \pi_2$.
This is where we use the fact that our sets are nonempty, in order to guarantee uniqueness.
(It is possible to develop the theory without requiring nonemptiness. This leads to a definition of teleological categories in which dualisable morphisms form a separate category, with a not-necessarily-faithful `embedding'. The reason we do not do this is that it seriously complicates the diagrammatic language by requiring functorial boxes \cite{mellies_functorial_boxes_string_diagrams}, and compositional game theory has so far never required a use of the empty set. On the other hand there are game-theoretic reasons to study colimits in $\Lens$, and so this may need to change in the future.)

\begin{proposition}
	There is an isomorphism of symmetric monoidal categories $\Lens_d \cong \Set_1 \times \Set_1^\op$.
\end{proposition}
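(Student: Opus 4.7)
The plan is to exhibit the isomorphism explicitly by sending an object $(X,S)$ of $\Lens_d$ to the same pair $(X,S)$ viewed as an object of $\Set_1 \times \Set_1^\op$, and sending an adaptor $\lambda : (X,S) \to (Y,R)$ to the pair $(v_\lambda, g) : (X,S) \to (Y,R)$ in $\Set_1 \times \Set_1^\op$, where $g : R \to S$ is the function guaranteed by the definition of adaptor to satisfy $u_\lambda = g \circ \pi_2$. In the opposite direction, a morphism $(f, g) : (X, S) \to (Y, R)$ in $\Set_1 \times \Set_1^\op$ (so $f : X \to Y$ and $g : R \to S$) is sent to the adaptor $(f, g)$ from the preceding notation.

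First I would verify that the assignment on morphisms is well-defined, which is exactly the point where nonemptiness of $X$ is used: given an adaptor $\lambda$ there exists some $g : R \to S$ with $u_\lambda(x, r) = g(r)$ for all $x, r$, and since $X$ is nonempty we may fix any $x_0 \in X$ and recover $g(r) = u_\lambda(x_0, r)$, which is independent of the choice of $x_0$; this makes $g$ the unique such factoring. Next I would check functoriality: the identity adaptor on $(X, S)$ has $v = \id_X$ and update $\pi_2 : X \times S \to S$, so it factors through $\id_S$; and given composable adaptors $\lambda = (f, g)$ and $\mu = (f', g')$, the formula $u_{\mu \circ \lambda}(x, q) = u_\lambda(x, u_\mu(v_\lambda(x), q)) = g(g'(q))$ together with $v_{\mu \circ \lambda} = f' \circ f$ shows that composition corresponds to $(f' \circ f, g \circ g')$, which is precisely composition in $\Set_1 \times \Set_1^\op$ (note the reversal in the second component).

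Then I would check that the two assignments on morphisms are mutually inverse, which is immediate from the description. Finally, preservation of the symmetric monoidal structure is a direct calculation: the monoidal product on $\Lens_d$ acts on objects by componentwise cartesian product, and on adaptors $\lambda = (f, g)$, $\lambda' = (f', g')$ the update of $\lambda \otimes \lambda'$ is $u_{\lambda \otimes \lambda'}((x,x'),(r,r')) = (g(r), g'(r'))$, so the monoidal product factors through $\pi_2$ with factor $g \times g'$, matching the product structure on $\Set_1 \times \Set_1^\op$; the monoidal unit $(1,1)$ and the symmetry/associator/unitor structure isomorphisms on both sides are uniquely determined adaptors built from identities, so they are automatically preserved.

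The only nontrivial obstacle is the well-definedness of the inverse-direction assignment, and more broadly keeping track of the fact that the contravariance in the second component of $\Set_1 \times \Set_1^\op$ matches the direction in which the update function $g : R \to S$ goes (from the codomain's $R$ to the domain's $S$). Everything else reduces to routine diagram-chasing once this is set up, aided by the uniqueness of $g$ that nonemptiness provides.
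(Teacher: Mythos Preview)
Your proposal is correct and follows essentially the same approach as the paper: the paper's proof is the one-line remark that the assignment $(f,g) \mapsto (f,g)$ defines a symmetric monoidal functor $\Set_1 \times \Set_1^\op \to \Lens_d$ which is invertible, and you have simply unpacked the verification of this (including the use of nonemptiness for uniqueness of $g$) in detail.
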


\begin{proof}
	It can be shown that $(-, -) : \Set_1 \times \Set_1^\op \to \Lens_d$ is a symmetric monoidal functor, and is invertible.
\end{proof}

\begin{proposition}
	There is a symmetric monoidal functor $-^* : \Lens_d^\op \to \Lens_d$ defined by $(X, S)^* = (S, X)$ and $(f, g)^* = (g, f)$.
\end{proposition}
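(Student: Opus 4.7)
The plan is to verify the claim by a direct, essentially mechanical, computation in three stages: functoriality (contravariant), strict preservation of the monoidal structure, and preservation of the symmetry. All three reduce to identities that hold already in $\Set_1$, so the main work is just to unpack the composition, tensor, and symmetry of adaptors in terms of their defining pairs of functions.

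For functoriality, note first that the identity on $(X,S)$ in $\Lens_d$ is the adaptor $(\id_X, \id_S)$, so $(\id_X, \id_S)^* = (\id_S, \id_X) = \id_{(S,X)}$. Next, given adaptors $(f,g):(X,S)\to(Y,R)$ and $(f',g'):(Y,R)\to(Z,Q)$, unfolding the lens composition formula gives $(f',g')\circ(f,g) = (f'\circ f,\, g\circ g')$, because the update of the composite adaptor discards its first argument just as both factors do. Applying $-^*$ yields $(g\circ g',\, f'\circ f)$, and a second unfolding shows this is exactly $(g,f)\circ(g',f') = (f,g)^*\circ(f',g')^*$, which is the correct contravariance condition for a functor from $\Lens_d^\op$ to $\Lens_d$.

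For the monoidal structure, I would observe that $-^*$ preserves both tensor and unit strictly: on objects $((X,S)\otimes(X',S'))^* = (S\times S', X\times X') = (S,X)\otimes(S',X')$, and $I^* = (1,1)^* = I$; on morphisms, using the tensor formula $(f,g)\otimes(f',g') = (f\times f',\, g\times g')$ for adaptors, we get $((f,g)\otimes(f',g'))^* = (g\times g',\, f\times f') = (f,g)^*\otimes(f',g')^*$. The structure isomorphisms (associator, unitors) of $\Lens_d$ are all adaptors built from the corresponding isomorphisms in $\Set_1$, so they too are preserved strictly. Finally, the symmetry $\sigma_{(X,S),(X',S')}$ is the adaptor $(\sigma_{X,X'},\, \sigma_{S',S})$, whose image under $-^*$ is $(\sigma_{S',S},\, \sigma_{X,X'}) = \sigma_{(S',X'),(S,X)} = \sigma_{(X',S')^*,(X,S)^*}$, giving the required symmetric coherence.

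A more conceptual way to see the same result, which I would include as a brief remark, is to use the previous proposition: under the isomorphism $\Lens_d \cong \Set_1\times\Set_1^\op$, the functor $-^*$ corresponds to the swap $\Set_1^\op\times\Set_1 \to \Set_1\times\Set_1^\op$, which is manifestly a strict symmetric monoidal isomorphism for the product monoidal structure. In either presentation there is no real obstacle; the only thing that requires care is keeping the variances of $f$ and $g$ straight as one moves between $\Lens_d$ and $\Lens_d^\op$, which is precisely why swapping the two components of an adaptor produces a functor of the asserted variance.
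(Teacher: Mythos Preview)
Your proof is correct. The paper's own proof is precisely your concluding conceptual remark: it simply observes that under the isomorphism $\Lens_d \cong \Set_1 \times \Set_1^\op$ from the preceding proposition, $-^*$ is the swap $\Set_1^\op \times \Set_1 \to \Set_1 \times \Set_1^\op$, which is trivially a symmetric monoidal functor. Your main argument instead verifies functoriality, strict monoidality, and preservation of the symmetry by direct computation with adaptors; this is longer but self-contained and does not rely on the previous result. Both approaches are perfectly adequate here, and since you already include the conceptual version as a remark, you have in effect given the paper's proof plus an explicit unwinding of it.
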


\begin{proof}
	This is trivially a symmetric monoidal functor $\Set_1^\op \times \Set_1 \to \Set_1 \times \Set_1^\op$.
\end{proof}

For any object $(X, S)$ we have $(X, S) \otimes (X, S)^* = (X \times S, S \times X)$.
There is a `counit' lens $\epsilon_{(X, S)} : (X \times S, S \times X) \to (1, 1)$ given by the unique $v_{\epsilon_{(X, S)}} : X \times S \to 1$, and $u_{\epsilon_{(X, S)}} ((x, s), *) = (s, x)$.
This can be thought of intuitively as a lens that focuses `outside' a data structure into nothing, which is represented by $1$.

\begin{proposition}\label{prop:lens-counit-law}
	For each adaptor $(f, g) : (X, S) \to (Y, R)$ the diagram
	\begin{center} \begin{tikzpicture}[node distance=3cm, auto]
		\node (A) at (0, 0) {$(X \times R, S \times Y)$}; \node (B) at (6, 0) {$(Y \times R, R \times Y)$};
		\node (C) [below of=A] {$(X \times S, S \times X)$}; \node (D) [below of=B] {$(1, 1)$};
		\draw [->] (A) to node {$(f, g) \otimes \id_{(R, Y)}$} (B); \draw [->] (B) to node {$\epsilon_{(Y, R)}$} (D);
		\draw [->] (A) to node {$\id_{(X, S)} \otimes (g, f)$} (C); \draw [->] (C) to node {$\epsilon_{(X, S)}$} (D);
	\end{tikzpicture} \end{center}
	commutes.
\end{proposition}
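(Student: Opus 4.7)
The proof will be a direct computation: since both sides of the square are lenses into $(1,1)$, equality is determined entirely by the update functions (the view components are both the unique map into $1$), and we only have to check that the two composite update functions $X \times R \times 1 \to S \times Y$ agree after unfolding the definitions of $\otimes$, $\epsilon$, $(-)^*$, and lens composition.

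The plan is to compute each composite in turn. First, along the top-right path, I would unfold $(f,g) \otimes \id_{(R,Y)}$: its view sends $(x,r)$ to $(f(x),r)$, and by the tensor formula $u_{\lambda \otimes \mu}((x,x'),(r,r')) = (u_\lambda(x,r), u_\mu(x',r'))$ its update sends $((x,r),(r',y))$ to $(g(r'),y)$. Postcomposing with $\epsilon_{(Y,R)}$, whose update sends $((y,r),*)$ to $(r,y)$, the composite lens has update $((x,r),*) \mapsto (g(r), f(x))$ by lens composition $u_{\mu \circ \lambda}(x,q) = u_\lambda(x, u_\mu(v_\lambda(x),q))$. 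Then, along the left-bottom path, I would similarly unfold: $(g,f) : (R,Y) \to (S,X)$ has view $g$ and update $f \circ \pi_2$, so $\id_{(X,S)} \otimes (g,f)$ sends $(x,r)$ to $(x,g(r))$ in the view and $((x,r),(s,x'))$ to $(s, f(x'))$ in the update; postcomposing with $\epsilon_{(X,S)}$, whose update sends $((x,s),*)$ to $(s,x)$, yields the composite update $((x,r),*) \mapsto (g(r), f(x))$, matching the first path.

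The main obstacle is not any conceptual difficulty but purely bookkeeping: the four sets $X, Y, R, S$ appear in twisted positions throughout (because of the $(-)^*$ functor swapping components, and the tensor pairing a covariant factor with a contravariant one), so the chief risk is mixing up which copy of a set is playing which role. To minimise this I would annotate each intermediate lens with its explicit typing $(A, B) \to (C, D)$ before unfolding, and carry out the two calculations in parallel so that the final expressions can be compared symbol-for-symbol.

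As a sanity check, the resulting equality $(g(r), f(x)) = (g(r), f(x))$ is manifestly symmetric in the roles of the two sides of the adaptor, which is exactly the content one expects of a counit law: the adaptor $(f,g)$ can equivalently be pushed through the cup to the opposite strand, where it becomes its dual $(g,f)$. This is the lens-theoretic analogue of the counit law for open games recalled in Section~\ref{sec:open-games}, and indeed the computation above is formally identical to the one underlying that law, with the update component playing the role of the coplay function.
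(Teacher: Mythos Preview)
Your proposal is correct and follows essentially the same approach as the paper: both argue that the view components agree trivially (being the unique map into $1$) and then verify by direct unfolding of the composition, tensor, and counit formulas that both composite update functions send $((x,r),*)$ to $(g(r),f(x))$. The paper merely writes out the chain of equalities more explicitly line by line, but the computation and its result are identical to yours.
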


\begin{proof}
	The view functions of the two composed lenses are both the unique function $X \times R \to 1$.
	For the update functions we check by direction calculation:
	\begin{align*}
		&u_{\epsilon_{(Y, R)} \circ ((f, g) \otimes \id_{(R, Y)})} ((x, r), *) \\
		=\ &u_{(f, g) \otimes \id_{(R, Y)}} ((x, r), u_{\epsilon_{(Y, R)}} (v_{(f, g) \otimes \id_{(R, Y)}} (x, r), *)) \\
		=\ &u_{(f, g) \otimes \id_{(R, Y)}} ((x, r), u_{\epsilon_{(Y, R)}} ((v_{(f, g)} (x), v_{\id_{(R, Y)}} (x, r)), *)) \\
		=\ &u_{(f, g) \otimes \id_{(R, Y)}} ((x, r), u_{\epsilon_{(Y, R)}} ((f (x), r), *)) \\
		=\ &u_{(f, g) \otimes \id_{(R, Y)}} ((x, r), (r, f (x))) \\
		=\ &(u_{(f, g)} (x, r), u_{\id_{(R, Y)}} (r, f (x))) \\
		=\ &(g (r), f (x)) \\
		=\ &(u_{\id_{(X, S)}} (x, g (r)), u_{(g, f)} (r, x)) \\
		=\ &u_{\id_{(X, S)} \otimes (g, f)} ((x, r), (g (r), x)) \\
		=\ &u_{\id_{(X, S)} \otimes (g, f)} ((x, r), u_{\epsilon_{(X, S)}} ((x, g (r)), *)) \\
		=\ &u_{\id_{(X, S)} \otimes (g, f)} ((x, r), u_{\epsilon_{(X, S)}} ((v_{\id_{(X, S)}} (x), v_{(g, f)} (r)), *)) \\
		=\ &u_{\id_{(X, S)} \otimes (g, f)} ((x, r), u_{\epsilon_{(X, S)}} (v_{\id_{(X, S)} \otimes (g, f)} (x, r), *)) \\
		=\ &u_{\epsilon_{(X, S)} \circ (\id_{(X, S)} \otimes (g, f))} ((x, r), *)
	\end{align*}
	Since the two lenses have equal view and update functions, they are equal.
\end{proof}

It was observed by Jeremy Gibbons that the play and coplay functions of an open game $\G : (X, S) \to (Y, R)$ can be rewritten as a single function $\Sigma_\G \to \hom_\Lens ((X, S), (Y, R))$, and the composition and tensor of open games can be written in terms of composition and tensor of lenses.
However the connections between open games and lenses run deeper, by observing that there are natural isomorphisms $X \cong \hom_\Lens ((1, 1), (X, S))$ and $Y \to R \cong \hom_\Lens ((Y, R), (1, 1))$, allowing the best response function of an open game to also be expressed in terms of lenses.
These considerations lead to a definition of morphisms between open games, which is however tangential to the topic of this paper.

\section{Teleological categories}\label{sec:teleological-categories}

We will now abstract the examples from the previous two sections, the categories of open games and lenses, into the axioms of a teleological category.
These axioms capture the features of those categories that are visible in the graphical language: a wide subcategory of dualisable objects, a dualisation functor on that subcategory, and a family of `counit' morphisms that interacts with duals via a counit law.

\begin{definition}
	A \emph{teleological category} is a symmetric monoidal category $\C$, together with:
	\begin{itemize}
		\item A wide symmetric monoidal subcategory $\C_d$ of \emph{dualisable morphisms}, with an involutive symmetric monoidal functor $-^* : \C_d^\op \to \C_d$
		
		\item An extranatural family of morphisms $\epsilon_X : X \otimes X^* \to I$ in $\C$, called \emph{counits}; more precisely, an extranatural transformation $\epsilon : F \to G$ between $F, G : \C_d^\op \times \C_d \to \C$, where $F (S, X) = X \otimes S^*$ and $G (S, X) = I$.
	\end{itemize}
	such that the diagrams
	\begin{center} \begin{tikzpicture}[node distance=2cm, auto]
		\node (A) at (0, 0) {$X^* \otimes X$}; \node (B) at (3, 0) {$X \otimes X^*$}; \node (C) [below of=B] {$I$};
		\draw [->] (A) to node {$\sigma_{X^*, X}$} (B); \draw [->] (B) to node {$\epsilon_X$} (C); \draw [->] (A) to node {$\epsilon_{X^*}$} (C);
	\end{tikzpicture} 

	\begin{tikzpicture}[node distance=2cm, auto]
		\node (A) at (0, 0) {$X \otimes Y \otimes X^* \otimes Y^*$}; \node (B) at (7, 0) {$X \otimes X^* \otimes Y \otimes Y^*$}; \node (C) [below of=B] {$I$};
		\draw [->] (A) to node {$X \otimes \sigma_{X^*, Y} \otimes Y^*$} (B); \draw [->] (B) to node {$\epsilon_X \otimes \epsilon_Y$} (C); \draw [->] (A) to node {$\epsilon_{X \otimes Y}$} (C);
	\end{tikzpicture} \end{center}
	commute for all objects $X$ and $Y$.
\end{definition}

We will now unpack parts of this definition.
Every object $X$ of $\C$ has a chosen `dual' $X^*$ satisfying $(X^*)^* = X$, $I^* = I$ and $(X \otimes Y)^* = X^* \otimes Y^*$.
(In all of our examples these equalities will be strict, although the monoidal structure will generally not be strict.)
Certain morphisms $f : X \to Y$, namely the dualisable morphisms, have a chosen `dual' $f^* : Y^* \to X^*$, functorially.
Since the category of dualisable morphisms is a symmetric monoidal subcategory, the structure morphisms are symmetries are required to be dualisable, and their duals are the structure morphisms and symmetries for the dual objects.

Every object $X$ has a chosen counit morphism $\epsilon_X : X \otimes X^* \to I$, which interact with dual objects and tensor products according to the two stated axioms.
Typically the counit morphisms will not be dualisable.
(There is an alternative definition possible, in which $\epsilon$ is more straightforwardly a monoidal extranatural transformation, and so can be composed without involving the symmetry, but at the expense of twisting the dual of a monoidal product to $(X \otimes Y)^* = Y^* \otimes X^*$, as in the definition of a compact closed category \cite{kelly80}.
This leads to a diagrammatic language in which duality is rotation rather than reflection, similar to \cite[section 4.6.2]{coecke_kissinger_picturing_quantum_processes}.)

Finally, the condition that $\epsilon$ is extranatural is a general `counit law', stating that for every dualisable morphism $f : X \to Y$, the following diagram commutes:
\begin{center} \begin{tikzpicture}[node distance=3cm, auto]
	\node (A) {$X \otimes Y^*$}; \node (B) [right of=A] {$Y \otimes Y^*$};
	\node (C) [below of=A] {$X \otimes X^*$}; \node (D) [below of=B] {$I$};
	\draw [->] (A) to node {$f \otimes Y^*$} (B); \draw [->] (A) to node {$X \otimes f^*$} (C);
	\draw [->] (B) to node {$\epsilon_Y$} (D); \draw [->] (C) to node {$\epsilon_X$} (D);
\end{tikzpicture} \end{center}

\begin{proposition}
	$\Lens$ is a teleological category.
\end{proposition}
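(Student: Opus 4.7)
The plan is to bolt together the structure developed in the earlier propositions. For the dualisable wide subcategory we take $\Lens_d$, with the involutive symmetric monoidal functor $-^*\colon \Lens_d^\op \to \Lens_d$ of the preceding proposition. Involutivity is immediate from the definitions: $(X,S)^{**} = (S,X)^* = (X,S)$ and $(f,g)^{**} = (g,f)^* = (f,g)$, while the strict equalities $I^* = I$ and $((X,S) \otimes (Y,R))^* = (X,S)^* \otimes (Y,R)^*$ hold by construction. For the counit family we take the lenses $\epsilon_{(X,S)}\colon (X,S) \otimes (X,S)^* \to (1,1)$ already defined.

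The extranaturality of $\epsilon$ as a transformation $F \Rightarrow G$ between $F, G\colon \Lens_d^\op \times \Lens_d \to \Lens$ with $F((R,Y),(X,S)) = (X,S) \otimes (R,Y)^*$ and $G$ constant at $I = (1,1)$ is exactly the content of Proposition \ref{prop:lens-counit-law}, once one lets the adaptor $(f,g)$ range freely. Since $G$ is a constant bifunctor, no further ordinary-naturality data need be checked, and the single square of Proposition \ref{prop:lens-counit-law} supplies the required extranaturality condition in both variables simultaneously.

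It then remains to verify the two coherence axioms by direct calculation, using that the symmetry in $\Lens$ is the adaptor with view $(a,c) \mapsto (c,a)$ and update $((a,c),(d,b)) \mapsto (b,d)$. For the symmetry axiom, both $\epsilon_X \circ \sigma_{X^*,X}$ and $\epsilon_{X^*}$ have the unique view $S \times X \to 1$, and on updates one computes $u_{\epsilon_{(X,S)} \circ \sigma}((s,x),*) = u_\sigma((s,x), u_{\epsilon_{(X,S)}}((x,s),*)) = u_\sigma((s,x),(s,x)) = (x,s) = u_{\epsilon_{(S,X)}}((s,x),*)$. The tensor axiom is analogous: both composites produce the unique view to $1$, and both updates compute the componentwise swap, paired appropriately after tracing through $\sigma_{X^*, Y}$ and the tensor product of lenses.

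The only step that requires any care is the extranaturality bookkeeping, namely matching the square established in Proposition \ref{prop:lens-counit-law} against the extranaturality condition expected by the definition of teleological category; the remaining coherence checks are mechanical unpacking of the definitions of $\otimes$ and $\sigma$ in $\Lens$.
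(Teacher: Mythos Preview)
Your proof is correct and follows the same approach as the paper, which simply states ``This is proven directly by results in section~\ref{sec:lenses}.'' You actually go further than the paper by explicitly verifying the two coherence axioms (the symmetry axiom and the tensor axiom) for $\epsilon$, which the paper leaves entirely implicit; your calculation for the symmetry axiom is correct, and the tensor axiom is indeed a routine analogue.
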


\begin{proof}
	This is proven directly by results in section \ref{sec:lenses}.
\end{proof}

To view $\Game$ as a teleological category we must do slightly more work.
We define $\Game_d$ as the wide subcategory of $\Game$ consisting of open games $\G$ with the following properties:
\begin{itemize}
	\item $\Sigma_\G = 1 = \{ * \}$, and $* \in \mathbf B_\G (h, k) (*)$ for all $h, k$
	
	\item $\mathbf C_\G : 1 \times X \times R \to S$ is a constant function in $X$
\end{itemize}
Equivalently, $\Game_d$ can be defined as the wide subcategory consisting of open games of the form
\[ (X, S) \cong (X \times 1, 1 \times S) \xrightarrow{f \otimes g^*} (Y \times 1, 1 \times R) \cong (Y, R) \]
for functions $f : X \to Y$, $g : R \to S$.

Given an object $(X, S)$, the counit $\epsilon_{(X, S)} : (X, S) \otimes (X, S)^* = (X \times S, S \times X) \to (1, 1)$ is the evident strategically trivial open game with $\mathbf C_{\epsilon_{(X, S)}} (*, (x, s), *) = (s, x)$.
Equivalently, $\epsilon_{(X, S)}$ can be defined as
\[ (X \times S, S \times X) \cong ((X \times S) \times 1, 1 \times (S \times X)) \xrightarrow{\id_{X \times S} \otimes \sigma_{X, S}^*} \]
\[ ((X \times S) \times 1, 1 \times (X \times S)) \cong (X \times S, X \times S) \xrightarrow{\epsilon_{X \times S}} (1, 1) \]

\begin{proposition}
	With these definitions, $\Game$ is a teleological category.
\end{proposition}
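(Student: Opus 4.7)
The plan is to leverage the close analogy between $\Game_d$ and $\Lens_d$: the alternative characterisation exhibits every dualisable open game as $f \otimes g^*$ for some pair of functions $f : X \to Y$, $g : R \to S$, which is essentially the shape of an adaptor lens. I would proceed by establishing the three pieces of teleological structure — the wide monoidal subcategory, the dualisation functor, and the counit family — each of which reduces to an analogous fact already proven for lenses in section \ref{sec:lenses}.

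First, I would verify that $\Game_d$ is a wide symmetric monoidal subcategory of $\Game$. Identities belong to $\Game_d$ (singleton strategy profile; coplay is second projection, independent of $X$); closure under composition and tensor follows because the product of singletons is a singleton, the $*\in\mathbf B(h,k)(*)$ condition is preserved, and independence of coplay in the $X$-argument propagates through both operations; and the symmetric monoidal structure morphisms of $\Game$ are themselves of the form $f \otimes g^*$ for structural bijections, hence dualisable. Next, I would show that $(X, S) \mapsto (S, X)$ together with $f \otimes g^* \mapsto g \otimes f^*$ defines an involutive symmetric monoidal functor $-^* : \Game_d^\op \to \Game_d$. The cleanest way is to establish $\Game_d \cong \Set_1 \times \Set_1^\op$ exactly as for $\Lens_d$, whereupon dualisation is simply the component-swap functor, and involutivity and monoidal functoriality are automatic.

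For the counit, I would verify the two coherence diagrams (symmetry $\epsilon_{X^*} = \epsilon_X \circ \sigma_{X^*, X}$ and tensor compatibility $\epsilon_{X \otimes Y} = (\epsilon_X \otimes \epsilon_Y) \circ (X \otimes \sigma_{X^*, Y} \otimes Y^*)$) by direct computation on coplay functions, tracking components of tuples; the strategy profile and best-response components are trivial because every morphism involved has a singleton strategy set. For extranaturality (the counit law for dualisable morphisms), I would apply the factorisation of a dualisable open game as $f \otimes g^*$ and compute both composites on coplay; this is essentially the same calculation as proposition \ref{prop:lens-counit-law}, since the coplay function of $f \otimes g^*$ coincides with the update function of the adaptor $(f, g)$, and the $\Game$-counit and $\Lens$-counit agree on coplay by construction.

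The main obstacle is notational: keeping the tensor rearrangement $(X, S) \otimes (X, S)^* = (X \times S, S \times X)$ and the internal transposition $\sigma_{X, S}^*$ straight through multiple applications of associativity and symmetry. A cleaner alternative would be to define a symmetric monoidal inclusion $\Lens_d \hookrightarrow \Game_d$ that respects dualisation and counits, and transport the teleological axioms along it; but checking that this inclusion commutes with the chosen counit data is itself where most of the bookkeeping resurfaces, so the direct computational route is probably no longer in practice.
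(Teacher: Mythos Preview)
Your proposal is correct and follows essentially the same route as the paper. The paper's proof is a single line observing that only extranaturality of $\epsilon$ remains, and that the calculation is essentially identical to proposition~\ref{prop:lens-counit-law} for lenses; you arrive at the same reduction, while also spelling out the routine checks (that $\Game_d$ is a wide symmetric monoidal subcategory, that $-^*$ is an involutive symmetric monoidal functor via $\Game_d \cong \Set_1 \times \Set_1^\op$, and the two coherence axioms for $\epsilon$) which the paper treats as implicit in the definitions preceding the proposition.
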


\begin{proof}
	It remains to prove extranaturality of $\epsilon$.
	The proof is essentially identical to that of proposition \ref{prop:lens-counit-law} for lenses.
\end{proof}

The graphical calculus for teleological categories defined in the next section is directly inspired by the graphical calculus of compact closed categories.
Unfortunately it is not the case that every compact closed category can be seen as a teleological category, because the duality in a compact closed category is not necessarily a monoidal functor, and because the counit morphisms are not necessarily extranatural.
However, in some cases it is possible to define a duality that satisfies the axioms of a teleological category.

\begin{proposition}
	Let $\mathbf{Rel}$ be the symmetric monoidal category of sets and relations, with cartesian product of sets as the monoidal product.
	Then $\mathbf{Rel}$ is a teleological category with $\mathbf{Rel}_d = \mathbf{Rel}$, where the dual of an object is $X^* = X$, the dual of a morphism is the converse relation, and the counit morphism $\epsilon_X : X \otimes X \to I$ is the relation with $(x_1, x_2) \epsilon_X *$ iff $x_1 = x_2$.
\end{proposition}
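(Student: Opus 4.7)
The plan is to check each piece of teleological structure in turn by direct calculation with relations, exploiting the fact that $X^* = X$ and that $\epsilon_X$ is the equality relation $\{((x,x),*) \mid x \in X\}$.

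First I would verify that $-^* : \mathbf{Rel}^\op \to \mathbf{Rel}$, sending a relation to its converse, is a well-defined involutive symmetric monoidal functor. The functoriality facts $\id_X^* = \id_X$, $(g \circ f)^* = f^* \circ g^*$, and $(f \otimes g)^* = f^* \otimes g^*$ are standard properties of relational converse, and $(f^*)^* = f$ is immediate. Since the symmetries $\sigma_{X,Y}$ in $\mathbf{Rel}$ are just (functional) swaps whose converses are again swaps, $-^*$ preserves the symmetric monoidal structure strictly, and the strict equalities $(X \otimes Y)^* = X^* \otimes Y^* = X \otimes Y$ and $I^* = I$ hold by the definition $X^* = X$.

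Next I would check the two coherence axioms for $\epsilon$. Both reduce to the observation that $\epsilon_X \subseteq (X \times X) \times I$ is the equality relation on $X$. The triangle $\epsilon_X \circ \sigma_{X^*,X} = \epsilon_{X^*}$ holds because pre-composing equality with swap still yields equality; the hexagon $(\epsilon_X \otimes \epsilon_Y) \circ (X \otimes \sigma_{X^*,Y} \otimes Y^*) = \epsilon_{X \otimes Y}$ holds because both sides are the equality relation on $X \times Y$.

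The step I expect to be the main content is extranaturality, but it is still a direct computation. For $f : X \to Y$ in $\mathbf{Rel}$, I would compute both legs of the counit-law square as subsets of $(X \times Y) \times I$. The upper leg $\epsilon_Y \circ (f \otimes \id_{Y^*})$ pairs $(x,y)$ with $*$ iff there is some $y'$ with $x \mathbin{f} y'$ and $y' = y$, i.e.\ iff $x \mathbin{f} y$. The lower leg $\epsilon_X \circ (\id_X \otimes f^*)$ pairs $(x,y)$ with $*$ iff there is some $x'$ with $y \mathbin{f^*} x'$ and $x' = x$, i.e.\ iff $x \mathbin{f} y$. The two relations coincide, so $\epsilon$ is extranatural.

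With these four checks in hand, all the data and axioms of a teleological category are in place, so $\mathbf{Rel}$ qualifies. I do not anticipate a real obstacle: each verification is a one-line calculation once the elementwise description of composition, tensor, converse, and $\epsilon_X$ is unfolded.
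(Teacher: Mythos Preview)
Your proposal is correct and is precisely the kind of direct verification the paper has in mind: the paper's own proof is simply the word ``Straightforward.'' Your write-up just unpacks that, checking in turn that converse is an involutive symmetric monoidal functor, that the two coherence axioms for $\epsilon$ reduce to the fact that equality is symmetric and that equality on $X \times Y$ is componentwise, and that extranaturality amounts to both legs of the counit square describing the graph of $f$.
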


\begin{proof}
	Straightforward.
\end{proof}

Finally, a surprising pair of examples can be found in the categories of simple graphs and $\cup$-matrices \cite{chantawibul_sobocinski_towards_compositional_graph_theory}.
We illustrate the former.
Recall that a PROP is a strict symmetric monoidal category whose objects are natural numbers in which the tensor product of natural numbers is addition.
(A prototypical example is categories of matrices with the Kronecker product.)
A \emph{symmetric monoidal theory} is a PROP with a presentation by generators and relations.
It is shown in \cite{chantawibul_sobocinski_towards_compositional_graph_theory} that a certain PROP $\mathbf{CGraph}$ of \emph{simple graphs} is equivalent to a symmetric monoidal theory with generators
\[ \{ \Delta : 1 \to 2, \bot : 1 \to 0, \nabla : 2 \to 1, \top : 0 \to 1, \cup : 2 \to 0, v : 0 \to 1 \} \]
represented graphically as
\begin{center} \begin{tikzpicture}
	\node [circle, scale=0.5, fill=black, draw] (A) at (0, 0) {}; \node [circle, scale=0.5, fill=black, draw] (B) at (0, -2) {};
	\node [circle, scale=0.5, fill=black, draw] (C) at (4, 0) {}; \node [circle, scale=0.5, fill=black, draw] (D) at (4, -2) {};
	\node [rectangle, draw] (E) at (7, -2) {};
	\draw [-] (-1, 0) to (A); \draw [-] (A) to [out=-60, in=180] (1, -.5); \draw [-] (A) to [out=60, in=180] (1, .5);
	\draw [-] (-1, -2) to (B);
	\draw [-] (3, -.5) to [out=0, in=-120] (C); \draw [-] (3, .5) to [out=0, in=120] (C); \draw [-] (C) to (5, 0);
	\draw [-] (D) to (5, -2);
	\draw [-] (7, -.5) to [out=0, in=-90] (8, 0) to [out=90, in=0] (7, .5);
	\draw [-] (E) to (8, -2);
\end{tikzpicture} \end{center}
together with a list of equations that includes
\begin{center} \begin{tikzpicture}
	\node [circle, scale=0.5, fill=black, draw] (A) at (0, .5) {};
	\node at (2, 1) {$=$};
	\node [circle, scale=0.5, fill=black, draw] (B) at (4, 2) {};
	\node [circle, scale=0.5, fill=black, draw] (C) at (8, .5) {};
	\node at (10, 1) {$=$};
	\node [circle, scale=0.5, fill=black, draw] (D) at (12, 2) {};
	\draw [-] (-1, 0) to [out=0, in=-120] (A); \draw [-] (-1, 1) to [out=0, in=120] (A);
	\draw [-] (A) to [out=0, in=-90] (1, 1) to [out=90, in=0] (-1, 2);
	\draw [-] (3, 0) to [out=0, in=-90] (5.5, 1) to [out=90, in=60] (B);
	\draw [-] (3, 1) to [out=0, in=-90] (4.5, 1.5) to [out=90, in=-60] (B);
	\draw [-] (3, 2) to (B);
	\draw [-] (C) to [out=0, in=-90] (9, 1) to [out=90, in=0] (7, 2);
	\draw [-] (11, 2) to (D);
\end{tikzpicture} \end{center}

We give $\mathbf{CGraph}$ the structure of a teleological category as follows.
Let $\mathbf{CGraph}_d$ be the sub-PROP of $\mathbf{CGraph}$ generated by $\{ \Delta, \bot, \nabla, \top \}$.
There is a PROP homomorphism $-^* : \mathbf{CGraph}_d^\op \to \mathbf{CGraph}$ generated by $\Delta^* = \nabla$, $\bot^* = \top$, $\nabla^* = \Delta$, $\top^* = \bot$.
(PROP homomorphisms are required to be identity-on-objects, so objects of $\mathbf{CGraph}$ are self-dual.)

We define the counit morphisms $\epsilon_n : n \oplus n \to 0$ recursively on $n$.
The base case is $\epsilon_0 = \id_0$, and the recursion is
\[ \epsilon_{n + 1} : n \oplus 1 \oplus n \oplus 1 \xrightarrow{\sigma_{n, 1} \oplus n \oplus 1} 1 \oplus n \oplus n \oplus 1 \xrightarrow{1 \oplus \epsilon_n \oplus 1} 1 \oplus 1 \xrightarrow{\cup} 0 \]

\begin{proposition}
	With these definitions, $\mathbf{CGraph}$ is a teleological category.
\end{proposition}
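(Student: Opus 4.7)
The plan is to exploit the symmetric monoidal theory presentation of $\mathbf{CGraph}$ and check the teleological axioms generator-by-generator, with inductive arguments to reduce everything to a handful of graphical identities.

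First I would verify that $-^*$ is well-defined as a PROP homomorphism $\mathbf{CGraph}_d^{\op} \to \mathbf{CGraph}_d$. Since $\mathbf{CGraph}_d$ is generated by $\{\Delta, \bot, \nabla, \top\}$, it is presented by those relations of the symmetric monoidal theory of \cite{chantawibul_sobocinski_towards_compositional_graph_theory} that involve only these four generators; this fragment is the theory of a bicommutative bimonoid, which is manifestly self-dual under the swap $\Delta \leftrightarrow \nabla$, $\bot \leftrightarrow \top$ combined with reversal of direction. Involutivity of $-^*$ follows from its involutivity on generators, and it is automatically symmetric monoidal and identity-on-objects as a PROP homomorphism, so that every object is self-dual. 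Next, because $n^* = n$, the symmetry axiom for $\epsilon$ reduces to $\epsilon_n \circ \sigma_{n,n} = \epsilon_n$, which I would prove by induction on $n$: the base case $n = 0$ is trivial, and the inductive step reduces to the analogous identity $\cup \circ \sigma_{1,1} = \cup$ (the ``edges are unordered'' relation, which is part of the list in \cite{chantawibul_sobocinski_towards_compositional_graph_theory}). The tensor axiom $\epsilon_{m+n} = (\epsilon_m \oplus \epsilon_n) \circ (m \oplus \sigma_{n,m} \oplus n)$ is, modulo the bookkeeping of nested symmetries, a restatement of the recursive definition of $\epsilon$ and follows by a second easy induction on $n$.

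Finally I would verify extranaturality of $\epsilon$. The class of morphisms $f$ for which the counit-law square commutes is closed under composition and under tensor product of $f$, and it contains every symmetry by the symmetry axiom for $\epsilon$ verified above. Hence it suffices to verify the counit law at each of the four generators $\Delta, \bot, \nabla, \top$; by involutivity of $-^*$, two of these four cases are redundant. For $\Delta : 1 \to 2$ the required equation $\epsilon_2 \circ (\Delta \oplus \id_2) = \epsilon_1 \circ (\id_1 \oplus \nabla)$, after expanding $\epsilon_2$ via the recursion and rearranging symmetries using the bimonoid axioms, becomes precisely one of the displayed graphical equations; the case of $\bot : 1 \to 0$ reduces similarly to the other displayed equation.

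The main obstacle is the bookkeeping in this last step: once $\epsilon$ is unfolded via the recursion, the extranaturality squares must be manipulated using the bicommutative bimonoid axioms into exactly the form of the equations listed in \cite{chantawibul_sobocinski_towards_compositional_graph_theory}, which is essentially a string-diagrammatic rewriting exercise that is conceptually straightforward but requires some care with symmetries. Everything else in the proof is either immediate from self-duality of the bicommutative bimonoid axioms or dispatched by a short induction on $n$.
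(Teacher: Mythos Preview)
Your proposal is correct and takes essentially the same approach as the paper: structural induction on morphisms of $\mathbf{CGraph}_d$ to establish extranaturality of $\epsilon$, reducing to the displayed graphical relations for the generators. You are in fact considerably more thorough than the paper, which simply asserts ``It remains to prove extranaturality of $\epsilon$. This can easily be proven by structural induction on morphisms of $\mathbf{CGraph}_d$'' and leaves the symmetry and tensor axioms for $\epsilon$, as well as well-definedness of $-^*$, entirely implicit; your inductive verifications of these are exactly the kind of detail the paper omits.
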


\begin{proof}
	It remains to prove extranaturality of $\epsilon$.
	This can easily be proven by structural induction on morphisms of $\mathbf{CGraph}_d$.
\end{proof}

\begin{definition}\label{def-teleological-functor}
	Let $\C$ and $\D$ be teleological categories.
	A \emph{teleological functor} $F : \C \to \D$ is a symmetric monoidal functor such that
	\begin{itemize}
		\item $F$ restricts to a symmetric monoidal functor $F_d : \C_d \to \D_d$ such that the diagram
		\begin{center} \begin{tikzpicture}[node distance=3cm, auto]
			\node (A) {$\C_d$}; \node (B) [right of=A] {$\D_d$}; \node (C) [below of=A] {$\C_d^\op$}; \node (D) [below of=B] {$\D_d^\op$};
			\draw [->] (A) to node {$F_d$} (B); \draw [->] (C) to node {$F_d^\op$} (D); \draw [->] (C) to node {$-^*$} (A); \draw [->] (D) to node {$-^*$} (B);
		\end{tikzpicture} \end{center}
		commutes
		
		\item $F (\epsilon_X) = \epsilon_{F (X)}$ for all objects $X$ of $\C$
	\end{itemize}
\end{definition}

\begin{lemma}
	Teleological categories and functors form a category $\TelCat$, with a forgetful functor $U : \TelCat \to \SymMonCat$.
\end{lemma}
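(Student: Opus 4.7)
The plan is to construct $\TelCat$ by specifying identities and composition on the nose and then checking the three teleological conditions of Definition \ref{def-teleological-functor} for each. The underlying structure on morphisms is inherited from $\SymMonCat$, so associativity and unitality come for free; the content of the lemma is that teleological functors are closed under composition and that identities are teleological, from which the forgetful functor $U : \TelCat \to \SymMonCat$ is just the inclusion on morphisms and thus automatically functorial.

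For the identity $\id_\C : \C \to \C$, note that it is a symmetric monoidal functor, it restricts to $\id_{\C_d}$ which strictly commutes with $-^*$, and it obviously preserves counits. For composition, suppose $F : \C \to \D$ and $G : \D \to \E$ are teleological functors. Then $G \circ F$ is symmetric monoidal since $\SymMonCat$ is a category. Because $F$ sends dualisable morphisms into $\D_d$ and $G$ sends dualisable morphisms into $\E_d$, the composite restricts to $(G \circ F)_d = G_d \circ F_d : \C_d \to \E_d$, again a symmetric monoidal functor. Compatibility with duality follows by pasting the two given squares:
\[ (G \circ F)_d \circ (-^*) \;=\; G_d \circ (F_d \circ (-^*)) \;=\; G_d \circ ((-^*) \circ F_d^\op) \;=\; ((-^*) \circ G_d^\op) \circ F_d^\op \;=\; (-^*) \circ (G \circ F)_d^\op. \]
Finally, counit preservation is immediate: $(G \circ F)(\epsilon_X) = G(F(\epsilon_X)) = G(\epsilon_{F(X)}) = \epsilon_{G(F(X))} = \epsilon_{(G \circ F)(X)}$.

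With identities and composition now verified to land in teleological functors, the category laws for $\TelCat$ are inherited directly from $\SymMonCat$ applied to the underlying symmetric monoidal functors. Defining $U$ to act as the identity on both objects and morphisms at the level of underlying symmetric monoidal data (forgetting $\C_d$, $-^*$ and $\epsilon$) gives a functor $U : \TelCat \to \SymMonCat$, since $U$ preserves identities and composition by construction. There is no genuine obstacle here; the only thing that might look non-routine is the pasting diagram for compatibility with $-^*$, but even that reduces to substituting one commuting square into another.
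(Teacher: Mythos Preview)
Your proof is correct. The paper states this lemma without proof, treating it as routine; your argument spells out exactly the closure-under-composition and identity checks that the paper leaves implicit, and does so accurately.
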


Teleological natural transformations can also be defined in the obvious way, yielding a 2-category.

There is an identity-on-objects faithful teleological functor $\Lens \to \Game$, defined as follows.
A lens $\lambda : (X, S) \to (Y, R)$ is taken to a game, also denoted $\lambda$, with $\Sigma_\lambda = 1$ and $\mathbf B_\lambda (x, k) (*) = \{ * \}$ for all $x, k$.
(Games with these properties are called \emph{strategically trivial} and could also be called `zero-player open games'.)
The play and coplay functions are given by $\mathbf P_\lambda (*, x) = v_\lambda (x)$ and $\mathbf C_\lambda (*, x, r) = u_\lambda (x, r)$.
The image of adaptors under this functor are precisely the dualisable open games.
Since the functor is faithful, it identifies $\Lens$ as the wide `teleological subcategory' of $\Game$ consisting of strategically trivial open games.

\section{Teleological diagrams}\label{sec:teleological-diagrams}

\begin{definition}
	A \emph{teleological signature} $\Sigma$ consists of sets $\obj (\Sigma)$, $\mor (\Sigma)$ of \emph{object symbols} and \emph{morphism symbols}, together with a chosen subset $\mor_d (\Sigma) \subseteq \mor (\Sigma)$ of \emph{dualisable morphism symbols}, and for each morphism symbol $f$, a pair of words $\dom (f)$, $\cod (f)$ over $\obj (\Sigma) \cup \obj (\Sigma)^*$, where $\obj (\Sigma)^*$ is the set of formal symbols $x^*$ for object symbols $x$.
\end{definition}

If we ignore the set of dualisable morphism symbols (which is peculiar to teleological categories), the remainder of this definition resembles a specialisation of autonomous signatures to pivotal categories \cite[section 4]{selinger11}, which are the special (and common) case of autonomous categories in which left and right duals coincide, and are involutive.

For words we use the notation $x_1^{o_1} \otimes \cdots \otimes x_n^{o_n}$, where each $o_i$ is either $*$ or nothing.
We adopt the convention that $x^{**} = x$ is a syntactic identity, i.e. that $-^*$ denotes removing a star if there is already one.

\begin{definition}
	A \emph{teleological diagram} over a teleological signature $\Sigma$ is a diagram with the following properties.
	Edges are labelled by object symbols and are also oriented.
	Internal nodes are labelled by morphism symbols, where if a node $\alpha$ is labelled by a morphism symbol $f : x_1^{o_1} \otimes \cdots \otimes x_m^{o_m} \to y_1^{o'_1} \otimes \cdots \otimes y_n^{o'_n}$ then:
	\begin{itemize}
		\item If $f : \mor (\Sigma) \setminus \mor_d (\Sigma)$ (that is, $f$ is a non-dualisable morphism symbol) then $\alpha$ has the shape
		\begin{center} \begin{tikzpicture}
			\node [rectangle, minimum height=2cm, minimum width=1cm, draw] (f) at (0, 0) {$f$};
			\node (x1) at (-1.5, -.5) {$x_1$}; \node at (-1.5, .1) {$\vdots$}; \node (xm) at (-1.5, .5) {$x_m$};
			\node (y1) at (1.5, -.5) {$y_1$}; \node at (1.5, .1) {$\vdots$}; \node (yn) at (1.5, .5) {$y_n$};
			\draw [->-] (x1) to (f.west |- x1); \draw [-<-] (xm) to (f.west |- xm);
			\draw [-<-] (y1) to (f.east |- y1); \draw [->-] (yn) to (f.east |- yn);
		\end{tikzpicture} \end{center}
		where each arrow is oriented backwards (i.e. to the left) if the corresponding letter is decorated with $^*$, and forwards if it is not.
		We call $\alpha$ a \emph{non-dualisable node}.
		
		\item If $f : \mor_d (\Sigma)$ is a dualisable morphism symbol then $\alpha$ has one of the two shapes
		\begin{center} \begin{tikzpicture}
			\node [trapezium, trapezium left angle=0, trapezium right angle=75, shape border rotate=90, trapezium stretches=true, minimum height=1cm, minimum width=2cm, draw] (f) at (0, 0) {$f$};
			\node (x1) at (-1.5, -.5) {$x_1$}; \node at (-1.5, .1) {$\vdots$}; \node (xm) at (-1.5, .5) {$x_m$};
			\node (y1) at (1.5, -.5) {$y_1$}; \node at (1.5, .1) {$\vdots$}; \node (yn) at (1.5, .5) {$y_n$};
			\draw [->-] (x1) to (f.west |- x1); \draw [-<-] (xm) to (f.west |- xm);
			\draw [-<-] (y1) to (f.east |- y1); \draw [->-] (yn) to (f.east |- yn);
			\node [trapezium, trapezium left angle=75, trapezium right angle=0, shape border rotate=270, trapezium stretches=true, minimum height=1cm, minimum width=2cm, draw] (f') at (5, 0) {$f$};
			\node (x1') at (6.5, -.5) {$x_1$}; \node at (3.5, .1) {$\vdots$}; \node (xm') at (6.5, .5) {$x_m$};
			\node (y1') at (3.5, -.5) {$y_1$}; \node at (6.5, .1) {$\vdots$}; \node (yn') at (3.5, .5) {$y_n$};
			\draw [->-] (x1') to (f'.east |- x1'); \draw [-<-] (xm') to (f'.east |- xm');
			\draw [-<-] (y1') to (f'.west |- y1'); \draw [->-] (yn') to (f'.west |- yn');
		\end{tikzpicture} \end{center}
		differing by a reflection.
		We define the \emph{parity} $\pi (\alpha) = 0, 1$ to distinguish these cases.
	\end{itemize}
	
	Teleological diagrams must additionally satisfy the \emph{teleological condition}, which says that a string pointing backwards in the time-direction cannot bend around to point forwards.
	Each edge can be parameterised as $[0, 1] \to \R \times [a, b]$, which we write $t \mapsto (x (t), y (t))$.
	We require that any stationary point of $y$ is not a minimum.
	Since the parameterisation is smooth, we can impose the (slightly stronger) condition that if $y' (t) = 0$ then $y'' (t) \leq 0$.
\end{definition}

We extend the parity function to all nodes by defining $\pi (\alpha) = 0$ when $\alpha$ is a non-dualisable internal node or an external node.

We \emph{assume} that our diagrams are sufficiently well-behaved to make the following lemma true.
It is sufficient, for example, for directed edges to have continuously-varying tangent vectors, in which case the lemma follows from the intermediate value theorem.

\begin{lemma}\label{lemma-edge-bending}
	Let $A$ be a teleological diagram, and $e$ be a directed edge in $A$ smoothly parameterised by $[0, 1] \to \R \times [a, b]$, $t \mapsto (x (t), y (t))$.
	Then one of the following is the case:
	\begin{itemize}
		\item $y$ is strictly increasing
		\item $y$ is strictly decreasing
		\item There is a point $0 < t < 1$ such that $y$ is strictly increasing on $[0, t)$, has a maximum at $t$, and is strictly decreasing on $(t, 1]$, and there is a neighbourhood of $t$ on which $x$ is either strictly increasing or strictly decreasing, so that in this neighbourhood the edge has one of the two forms
		\begin{center} \begin{tikzpicture}
			\draw [->-] (0, 0) to [out=0, in=-90] (1, 1) to [out=90, in=0] (0, 2);
			\draw [->-] (4, 2) to [out=0, in=90] (5, 1) to [out=-90, in=0] (4, 0);
		\end{tikzpicture} \end{center}
	\end{itemize}
\end{lemma}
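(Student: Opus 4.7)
The plan is to extract the key geometric consequence of the teleological condition---that the $y$-coordinate cannot have an interior local minimum---and then do a case analysis on the sign of $y'$. Throughout, the continuity of the tangent vector $(x'(t), y'(t))$ together with its non-vanishing (smooth regularity of the parameterisation) will play a crucial role: it implies $y'$ is continuous and, whenever $y'(t) = 0$, forces $x'(t) \neq 0$.

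The central technical step, which I would prove first, is the following: there do not exist $a < b$ in $[0,1]$ with $y'(a) < 0$ and $y'(b) > 0$. Suppose for contradiction such $a, b$ exist. Since $y'(a) < 0$, the function $y$ is strictly decreasing in a right neighbourhood of $a$, so $y$ takes values strictly below $y(a)$ on $(a, a+\delta)$; symmetrically $y$ takes values below $y(b)$ on $(b-\delta, b)$. Therefore the minimum of $y$ on the compact interval $[a,b]$ is attained not at the endpoints but at some interior point $c \in (a,b)$. Necessarily $y'(c) = 0$, so by the teleological condition $y''(c) \leq 0$; on the other hand $c$ is an interior minimum, so $y''(c) \geq 0$. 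This forces $y''(c) = 0$, and the well-behavedness hypothesis (e.g.\ continuity of the tangent, as flagged by the author) rules out the remaining pathological flat minimum by directly applying the intermediate value theorem to the sign of $y'$ across $c$.

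With that in hand I would split into three cases. If $y'(t) > 0$ throughout $[0,1]$ then $y$ is strictly increasing (case 1); if $y'(t) < 0$ throughout then strictly decreasing (case 2). Otherwise $y'$ must take both signs or vanish somewhere. By the key step the set $\{y' > 0\}$ lies entirely to the left of $\{y' < 0\}$. Let $t_0 \in [0,1]$ be the (necessarily interior, by the same endpoint-derivative argument) point at which $y$ attains its maximum on $[0,1]$. Then $y'(t_0) = 0$, and the segregation of signs together with the strict monotonicity on intervals where $y'$ has constant non-zero sign yields $y' > 0$ on $[0, t_0)$ and $y' < 0$ on $(t_0, 1]$. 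Hence $y$ is strictly increasing on $[0,t_0)$, has its unique maximum at $t_0$, and is strictly decreasing on $(t_0,1]$.

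For the local picture near $t_0$, I would use smoothness: since $(x'(t_0), y'(t_0)) \neq (0,0)$ and $y'(t_0) = 0$, we have $x'(t_0) \neq 0$; by continuity $x'$ has constant sign in a neighbourhood of $t_0$, so $x$ is strictly monotone there. Combined with $y$ rising then falling, this produces exactly one of the two depicted bend shapes, according to the sign of $x'(t_0)$. The main obstacle is the borderline case $y''(c) = 0$ in the key step: truly degenerate critical points are excluded only under the author's tacit well-behavedness assumption, which in the continuous-tangent setting reduces cleanly to an application of the intermediate value theorem, as indicated in the remark preceding the lemma.
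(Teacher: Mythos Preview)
The paper does not give a proof of this lemma. The sentence immediately preceding it reads: ``We \emph{assume} that our diagrams are sufficiently well-behaved to make the following lemma true. It is sufficient, for example, for directed edges to have continuously-varying tangent vectors, in which case the lemma follows from the intermediate value theorem.'' There is no proof environment after the statement; the author simply adopts the conclusion as a standing regularity hypothesis on diagrams, with only that one-line hint that continuous tangents plus the intermediate value theorem suffice.

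Your proposal therefore supplies considerably more than the paper does. The strategy---rule out interior local minima of $y$ via the teleological condition, then carry out a sign analysis on $y'$---is exactly the argument the author's hint points to, and is essentially correct. The residual issue you yourself flag (the degenerate $y''(c)=0$ case in the key step) and one your case split does not quite cover (the situation $y' \geq 0$ throughout with an isolated interior zero, where $y$ is still strictly increasing but your trichotomy sends it to the ``otherwise'' branch and then seeks an interior maximum that is not there) are precisely the borderline configurations the paper's blanket well-behavedness assumption is meant to absorb. So your write-up is not in conflict with the paper; it is a fleshed-out version of the author's hint, subject to the same genericity caveats the author invokes rather than discharges.
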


This is in the spirit of similar simplifying assumptions made in the literature to prove coherence theorems (see \cite{selinger11}), but contradicts the assumption in \cite{joyal_street_planar_diagrams_tensor_algebra} for autonomous categories that diagrams are piecewise linear.
The previous lemma could also be proved by assuming that diagrams are piecewise linear and that no piece is parallel to the $x$-axis.
(It is possible to motivate this by thinking of $|\mathrm{d}x / \mathrm{d}t|$ as a notion of `signalling speed' that may not become infinity.)

\begin{lemma}\label{lem:progressive-implies-teleological}
	The progressive condition on diagrams implies the teleological condition.
\end{lemma}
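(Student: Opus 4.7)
The plan is to exploit the fact that the progressive condition is precisely a monotonicity condition on the time-coordinate along each edge, and then show that a strictly monotonic smooth function cannot have a stationary point at which the second derivative is positive.

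First I would fix a teleological (really, circuit) diagram $A$ satisfying the progressive condition, and an arbitrary edge $e$ of $A$ smoothly parameterised as $t \mapsto (x(t), y(t))$ for $t \in [0,1]$. The progressive condition says that the edge intersects each horizontal slice $\R \times \{y_0\}$ at most once, which is exactly the statement that the function $y : [0,1] \to \R$ is injective. A continuous injection from an interval into $\R$ is strictly monotonic, so $y$ is either strictly increasing or strictly decreasing; without loss of generality assume the former.

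Now suppose, for contradiction, that there is a point $t_0$ with $y'(t_0) = 0$ and $y''(t_0) > 0$. Because $y$ is strictly increasing and differentiable, $y'(t) \geq 0$ for every $t$. On the other hand, $y''(t_0) > 0$ together with continuity of $y''$ (implicit in the smoothness assumption on the edge) gives $y'' > 0$ on some open neighbourhood of $t_0$, so $y'$ is strictly increasing there. Combined with $y'(t_0) = 0$, this forces $y'(t) < 0$ for $t$ slightly smaller than $t_0$, contradicting $y' \geq 0$. Hence $y''(t_0) \leq 0$, which is exactly the teleological condition for this edge. Since the edge was arbitrary, the whole diagram satisfies the teleological condition.

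The only mildly delicate point is the behaviour at the boundary $t \in \{0, 1\}$, where $y$ need not have a two-sided neighbourhood. However, endpoints of edges in a teleological diagram are either external nodes (which sit on the horizontal boundary, so by the smoothness conventions of the previous lemma the edge meets them transversally and $y'$ is nonzero there) or internal nodes (where the parameterisation can be extended slightly past the node into the ambient plane, so the same argument applies). Thus no obstacle arises and the implication holds; the argument really is just the observation that strict monotonicity of $y$ rules out interior minima.
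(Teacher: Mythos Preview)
Your argument is correct. The paper's proof is the same idea done as a two-line contrapositive: if the teleological condition fails then $y$ has a local minimum at some parameter $t_0$ (the paper writes ``maximum'', evidently a slip), and then for small $\epsilon>0$ the edge meets the time-slice $\R\times\{y(t_0)\pm\epsilon\}$ in two places, contradicting progressivity directly. You instead first convert progressivity into strict monotonicity of $y$ and then use a calculus argument to exclude $y'(t_0)=0$ with $y''(t_0)>0$. The content is identical --- a local minimum of $y$ is incompatible with each horizontal line being crossed at most once --- but the paper's geometric phrasing bypasses the derivative manipulations and the endpoint discussion you had to append. Your route has the small virtue of spelling out why injectivity of $y$ is exactly the progressive condition, which the paper leaves implicit.
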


\begin{proof}
	Suppose the teleological condition fails for some edge $e$ with parameterisation $t \mapsto (x (t), y (t))$, so $y (t)$ is a local maximum.
	Then for sufficiently small $\epsilon > 0$, the edge $e$ intersects the time-slice $\R \times \{ y (t) - \epsilon \}$ in two places.
\end{proof}

As for circuit diagrams, internal nodes determine an ordering on the adjacent edges, and diagrams determine an ordering on the edges adjacent to external nodes.
We write this as
\[ \alpha, A : e_1^{o_1} \otimes \cdots \otimes e_m^{o_m} \to e'_1{}^{o'_1} \otimes \cdots \otimes e'_n{}^{o'_n} \]
where the $o_i, o'_j$ are either $*$ if the edge is oriented backwards in the time direction (that is, if $y$ is decreasing in a neighbourhood of the node), or nothing if it is oriented forwards.

\begin{definition}
	Let $A$ and $B$ be teleological diagrams in $\R \times [a, b]$ over a teleological signature $\Sigma$.
	A \emph{isomorphism} $i : A \cong_t B$ between $A$ and $B$ consists of a label-preserving bijection between the nodes of $A$ and $B$, and a label-preserving bijection between the edges of $A$ and $B$, both written $i$, such that
	\begin{itemize}
		\item For each internal node $\alpha$ of $A$ with $\pi (\alpha) = \pi (i (\alpha))$,
		\[ \alpha : e_1^{o_1} \otimes \cdots \otimes e_m^{o_1} \to e'_1{}^{o'_1} \otimes \cdots \otimes e'_n{}^{o'_n} \]
		iff
		\[ i (\alpha) : i (e_1)^{o_1} \otimes \cdots \otimes i (e_m)^{o_m} \to i (e'_1)^{o'_1} \otimes \cdots \otimes i (e'_n)^{o'_n} \]
		
		\item For each internal node $\alpha$ of $A$ with $\pi (\alpha) \neq \pi (i (\alpha))$,
		\[ \alpha : e_1^{o_1} \otimes \cdots \otimes e_m^{o_m} \to e'_1{}^{o'_1} \otimes \cdots \otimes e'_n{}^{o'_n} \]
		iff
		\[ i (\alpha) : i (e'_1)^{o'_1 *} \otimes \cdots \otimes i (e'_n)^{o'_n *} \to i (e_1)^{o_1 *} \otimes \cdots \otimes i (e_m)^{o_m *} \]
		
		\item For each external node $\beta$ of $A$, $\beta$ is adjacent to edge $e$ iff $i (\beta)$ is adjacent to edge $i (e)$, and
		\[ A : e_1^{o_1} \otimes \cdots \otimes e_m^{o_m} \to e'_1{}^{o'_1} \otimes \cdots \otimes e'_n{}^{o'_n} \]
		iff
		\[ B : i (e_1)^{o_1} \otimes \cdots \otimes i (e_m)^{o_m} \to i (e'_1)^{o'_1} \otimes \cdots \otimes i (e'_n)^{o'_n} \]
	\end{itemize}
\end{definition}

In words, an isomorphism $A \cong_t B$ is an oriented version of equivalence of string diagrams (definition \ref{def:circuit-diagram-isomorphism}), in which only dualisable nodes may optionally be reflected.

\begin{lemma}
	$\cong_t$ is an equivalence relation on teleological diagrams.
\end{lemma}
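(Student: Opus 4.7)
The plan is to verify reflexivity, symmetry, and transitivity in turn, with the main work being a case analysis in the transitive step that hinges on the syntactic identity $o^{**} = o$.

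For reflexivity, I would take $i$ to be the identity bijection on nodes and edges. Every node $\alpha$ then satisfies $\pi(\alpha) = \pi(i(\alpha))$, so the first clause of the isomorphism definition applies and reduces to the tautology that $\alpha$ has the ordering on adjacent edges that it has; the external-node clause is immediate for the same reason.

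For symmetry, given $i: A \cong_t B$, I would take $i^{-1}: B \cong_t A$ on both nodes and edges. Label preservation is immediate. For a node $\beta$ of $B$, write $\alpha = i^{-1}(\beta)$. If $\pi(\beta) = \pi(\alpha)$ (equivalently, $\pi(\alpha) = \pi(i(\alpha))$), the first clause for $i$ at $\alpha$ is a biconditional whose converse is exactly the first clause for $i^{-1}$ at $\beta$. If $\pi(\beta) \neq \pi(\alpha)$, the second clause for $i$ at $\alpha$ reads
\[ \alpha : e_1^{o_1} \otimes \cdots \otimes e_m^{o_m} \to e'_1{}^{o'_1} \otimes \cdots \otimes e'_n{}^{o'_n} \iff i(\alpha) : i(e'_1)^{o'_1 *} \otimes \cdots \to i(e_1)^{o_1 *} \otimes \cdots; \]
rewriting using $o^{**} = o$ and the bijection $i^{-1}$ yields precisely the second clause for $i^{-1}$ at $\beta$. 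The external-node clause is symmetric in $A$ and $B$ already.

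For transitivity, given $i: A \cong_t B$ and $j: B \cong_t C$, I propose $j \circ i$ on nodes and edges, and split into four cases according to whether $\pi$ agrees across $i$ and across $j$. In the case where both agree, two applications of the first clause compose to the first clause for $j \circ i$. In the mixed cases (one agreement, one disagreement), one application of the first clause and one of the second clause compose to the second clause for $j \circ i$, because applying $^*$ once commutes with the unstarred clause. The case I expect to be fiddliest, and the main obstacle worth being careful about, is when both $i$ and $j$ flip parity: then $\pi(\alpha) = \pi(j(i(\alpha)))$ so $j \circ i$ must satisfy the \emph{first} clause, and one must apply the second clause twice and invoke $o^{**} = o$ on every letter to see that the two reflections cancel; this is exactly where the convention introduced just before the definition of a teleological diagram pays off. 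The external-node condition is again a straightforward composition of biconditionals.

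Finally, I would note that at each step the underlying bijections compose or invert as bijections of nodes and edges, and label preservation is preserved under composition and inversion, so the structural data of an isomorphism is closed under the required operations. This finishes the verification that $\cong_t$ is an equivalence relation.
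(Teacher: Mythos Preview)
Your proposal is correct and is exactly the routine verification the paper has in mind; the paper's own proof is simply the word ``Straightforward.'' Your careful case analysis, especially the observation that two parity flips compose to the first clause via the syntactic identity $o^{**}=o$, is precisely the content hidden behind that word.
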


\begin{proof}
	Straightforward.
\end{proof}

By the usual method of \cite{joyal91} we build a symmetric monoidal category $\D_t (\Sigma)$ whose objects are words over $\Sigma \cup \Sigma^*$, and whose morphisms are $\cong_t$-equivalence classes of teleological categories.

We take the category $\D_t (\Sigma)_d$ of dualisable morphisms to be the wide subcategory consisting of teleological diagrams whose internal nodes are all labelled by dualisable morphism symbols, and which satisfy the progressive condition.
The functor $i : \D_t (\Sigma)_d \to \D_t (\Sigma)$ is inclusion.
This is a subcategory by lemma \ref{lem:progressive-implies-teleological}, and because the identity morphism on a word is a diagram containing no internal nodes and which is progressive.

Duality on words is defined by $(x_1^{o_1} \otimes \cdots \otimes x_n^{o_n})^* = x_1^{o_1 *} \otimes \cdots \otimes x_n^{o_n *}$, recalling our convention that $x^{**} = x$.
Duality on diagrams whose nodes are all dualisable is defined by reflection in the horizontal (time) direction.
For example, the dual of the diagram
\begin{center} \begin{tikzpicture}
	\node (x1) at (0, 0) {$x_1$}; \node (x2) at (0, 1) {$x_2$}; \node (x3) at (0, 2) {$x_3$};
	\node (y1) at (8, 0) {$y_1$}; \node (y2) at (8, 1) {$y_2$}; \node (y3) at (8, 2) {$y_3$};
	\node [trapezium, trapezium left angle=0, trapezium right angle=75, shape border rotate=90, trapezium stretches=true, minimum height=1cm, minimum width=2cm, draw] (f) at (2, 0) {$f$};
	\node [trapezium, trapezium left angle=0, trapezium right angle=75, shape border rotate=90, trapezium stretches=true, minimum height=1cm, minimum width=2cm, draw] (g) at (6, 0) {$g$};
	\node [trapezium, trapezium left angle=0, trapezium right angle=75, shape border rotate=90, trapezium stretches=true, minimum height=1cm, minimum width=2cm, draw] (h) at (4, 2) {$h$};
	\node (d1) at (0, 1.5) {}; \node (d2) at (0, 2.5) {}; \node (d3) at (8, -.5) {}; \node (d4) at (8, .5) {};
	\draw [->-] (x1) to [out=0, in=180] (f); \draw [->-] (h.west |- d1) to [out=180, in=0] (x2); \draw [->-] (x3) to [out=0, in=180] (h.west |- d2);
	\draw [->-] (y1) to [out=180, in=0] (g.east |- d3); \draw [->-] (g.east |- d4) to [out=0, in=180] (y2); \draw [->-] (h) to (y3);
	\draw [->-] (g) to node [above] {$z$} (f);
\end{tikzpicture} \end{center}
(which is a morphism $x_1 \otimes x_2^* \otimes x_3 \to y_1^* \otimes y_2 \otimes y_3$ in $\D_t (\Sigma)_d$) is
\begin{center} \begin{tikzpicture}
	\node (x1) at (8, 0) {$x_1$}; \node (x2) at (8, 1) {$x_2$}; \node (x3) at (8, 2) {$x_3$};
	\node (y1) at (0, 0) {$y_1$}; \node (y2) at (0, 1) {$y_2$}; \node (y3) at (0, 2) {$y_3$};
	\node [trapezium, trapezium left angle=75, trapezium right angle=0, shape border rotate=270, trapezium stretches=true, minimum height=1cm, minimum width=2cm, draw] (f) at (6, 0) {$f$};
	\node [trapezium, trapezium left angle=75, trapezium right angle=0, shape border rotate=270, trapezium stretches=true, minimum height=1cm, minimum width=2cm, draw] (g) at (2, 0) {$g$};
	\node [trapezium, trapezium left angle=75, trapezium right angle=0, shape border rotate=270, trapezium stretches=true, minimum height=1cm, minimum width=2cm, draw] (h) at (4, 2) {$h$};
	\node (d1) at (0, 1.5) {}; \node (d2) at (0, 2.5) {}; \node (d3) at (8, -.5) {}; \node (d4) at (8, .5) {};
	\draw [->-] (x1) to [out=180, in=0] (f); \draw [->-] (h.east |- d1) to [out=0, in=180] (x2); \draw [->-] (x3) to [out=180, in=0] (h.east |- d2);
	\draw [->-] (y1) to [out=0, in=180] (g.west |- d3); \draw [->-] (g.west |- d4) to [out=180, in=0] (y2); \draw [->-] (h) to (y3);
	\draw [->-] (g) to node [above] {$z$} (f);
\end{tikzpicture} \end{center}
(which is a morphism $y_1 \otimes y_2^* \otimes y_3^* \to x_1^* \otimes x_2 \otimes x_3^*$).
It is straightforward to show that this construction is well-defined on $\cong_t$-equivalence classes, and that it defines a symmetric monoidal functor $-^* : \D_t (\Sigma)_d^\op \to \D_t (\Sigma)_d$.

Given an object $x_1^{o_1} \otimes \cdots \otimes x_n^{o_n}$, the counit
\[ \epsilon_{x_1^{o_1} \otimes \cdots \otimes x_n^{o_n}} : x_1^{o_1} \otimes \cdots \otimes x_n^{o_n} \otimes x_1^{o_1 *} \otimes \cdots \otimes x_n^{o_n *} \to I \]
in $\D_t (\Sigma)$ is (the $\cong_t$-equivalence class of) the obvious diagram consisting of $n$ interleaved caps.
For example,
\[ \epsilon_{x_1 \otimes x_2^* \otimes x_3} : x_1 \otimes x_2^* \otimes x_3 \otimes x_1^* \otimes x_2 \otimes x_3^* \to I \]
is the diagram depicted in figure \ref{fig:counit-example}.
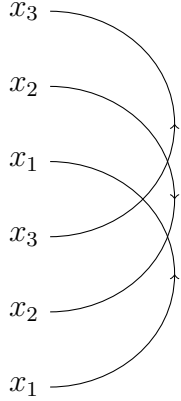
\begin{figure}
\begin{center} \begin{tikzpicture}
	\node (x1) at (0, 0) {$x_1$}; \node (x2) at (0, 1) {$x_2$}; \node (x3) at (0, 2) {$x_3$};
	\node (x1') at (0, 3) {$x_1$}; \node (x2') at (0, 4) {$x_2$}; \node (x3') at (0, 5) {$x_3$};
	\draw [->-] (x1) to [out=0, in=-90] (2, 1.5) to [out=90, in=0] (x1');
	\draw [->-] (x2') to [out=0, in=90] (2, 2.5) to [out=-90, in=0] (x2);
	\draw [->-] (x3) to [out=0, in=-90] (2, 3.5) to [out=90, in=0] (x3');
\end{tikzpicture} \end{center}
\caption{Example of counit in $\D_t (\Sigma)$}
\label{fig:counit-example}
\end{figure}
Extranaturality requires certain equalities between morphisms in $\D_t (\Sigma)$, which means isomorphisms between diagrams.
These can easily be seen.
For example, for a dualisable morphism symbol $f : x_1 \otimes x_2^* \to y_1^* \otimes y_2$ there is an isomorphism
\begin{center} \begin{tikzpicture}
	\node (x1) at (0, 0) {$x_1$}; \node (x2) at (0, 1) {$x_2$}; \node (y1) at (0, 2) {$y_1$}; \node (y2) at (0, 3) {$y_2$};
	\node [trapezium, trapezium left angle=0, trapezium right angle=75, shape border rotate=90, trapezium stretches=true, minimum height=1cm, minimum width=2cm, draw] (f) at (2, .5) {$f$};
	\draw [->-] (x1) to (f.west |- x1); \draw [->-] (f.west |- x2) to (x2);
	\draw [->-] (y1) to (2, 2) to [out=0, in=90] (4, 1) to [out=-90, in=0] (f.east |- x1); \draw [->-] (f.east |- x2) to [out=0, in=-90] (4, 2) to [out=90, in=0] (2, 3) to (y2);
	\node at (6, 1.5) {$\cong_t$};
	\node (x1') at (8, 0) {$x_1$}; \node (x2') at (8, 1) {$x_2$}; \node (y1') at (8, 2) {$y_1$}; \node (y2') at (8, 3) {$y_2$};
	\node [trapezium, trapezium left angle=75, trapezium right angle=0, shape border rotate=270, trapezium stretches=true, minimum height=1cm, minimum width=2cm, draw] (f') at (10, 2.5) {$f$};
	\draw [->-] (x1') to [out=0, in=-90] (12, 1) to [out=90, in=0] (f'.east |- y1'); \draw [->-] (f'.east |- y2') to [out=0, in=90] (12, 2) to [out=-90, in=0] (x2');
	\draw [->-] (y1') to (f'.west |- y1'); \draw [->-] (f'.west |- y2') to (y2');
\end{tikzpicture} \end{center}

\begin{proposition}
	Let $\Sigma$ be a teleological signature.
	The above construction defines a teleological category $\D_t (\Sigma)$.
\end{proposition}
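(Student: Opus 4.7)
The plan is to check each clause of the definition of a teleological category in turn, leaning on Joyal--Street-style machinery for the purely symmetric monoidal content and concentrating the effort on the two features specific to the teleological setting: the reflection duality and the extranaturality of the counit.

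First I would note that $\D_t(\Sigma)$ carries a symmetric monoidal structure by essentially the same argument used for $\D_c(\Sigma)$ in Theorem \ref{joyal-street-theorem}: horizontal juxtaposition supplies the tensor product, gluing diagrams along a common labelled (and now \emph{oriented}) boundary supplies composition, and these operations descend to $\cong_t$-equivalence classes because $\cong_t$ is defined node-by-node and edge-by-edge. The only delicate point is that the concatenation of two teleological diagrams is itself teleological; this is where one invokes the smoothness assumption from Lemma \ref{lemma-edge-bending}, since no new $y$-extrema are introduced at a matched boundary. The subcategory $\D_t(\Sigma)_d$ is wide because identity diagrams have no internal nodes and are progressive, and it is closed under composition and tensor because progressiveness is preserved by these operations; Lemma \ref{lem:progressive-implies-teleological} ensures the results remain teleological diagrams.

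Next I would define $-^* : \D_t(\Sigma)_d^\op \to \D_t(\Sigma)_d$. On objects it conjugates each $o_i$; on a diagram $A$ it is the horizontal reflection illustrated in the example immediately before the proposition, which flips the parity of each dualisable node and reverses every edge orientation. Reflection preserves progressiveness and takes dualisable nodes to dualisable nodes, and it is well-defined on $\cong_t$-equivalence classes precisely because the second bullet in the definition of $\cong_t$ allows a dualisable node to be matched with its reflected counterpart together with the corresponding swap of domain/codomain and orientations. Involutivity, strict monoidality, and functoriality are then immediate from the reflection description. The counit $\epsilon_X$ is the diagram of $n$ interleaved caps displayed in Figure \ref{fig:counit-example}, and the two counit axioms reduce to explicit isomorphisms of diagrams: the symmetry axiom holds because bending $X \otimes X^*$ and bending $X^* \otimes X$ into caps differ only by a braiding on the cap strands, and the tensor axiom holds because the interleaved caps comprising $\epsilon_{X \otimes Y}$ can be resorted, via a symmetry on the middle factors, into the side-by-side juxtaposition $\epsilon_X \otimes \epsilon_Y$.

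The hard part is extranaturality, which asserts that for every dualisable morphism $[A] : X \to Y$, composing $\epsilon_Y$ with $[A] \otimes Y^*$ yields the same equivalence class as composing $\epsilon_X$ with $X \otimes [A]^*$. I would prove this by exhibiting an explicit $\cong_t$-isomorphism between the two composite diagrams. Geometrically the two composites depict the same picture drawn in two different ways: either $A$ is placed on the input side of the caps with its outputs flowing into them, or the reflected diagram $A^*$ is placed on the output side with its (orientation-reversed) inputs flowing out of them. The bijection of nodes sends each node of $A$ in the left-hand composite to its reflected image in the right-hand composite, exchanging parity; the bijection of edges sends each edge to the corresponding strand, reversing orientation precisely on those strands that are threaded through a cap. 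The three clauses of the definition of $\cong_t$ then reduce to a case analysis on whether a given node or edge lies inside $A$ or on a cap strand, and each case is routine once the parity and orientation bookkeeping is laid out. The author's worked example of extranaturality for a single-node $f : x_1 \otimes x_2^* \to y_1^* \otimes y_2$ shown just before the proposition is exactly the base case of this argument; a structural induction on the dualisable diagram $A$ (over single nodes, identities, composition and tensor) could be used as an alternative presentation. With extranaturality in hand, all the data and axioms of a teleological category are verified.
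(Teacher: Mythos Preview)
The paper does not supply a proof for this proposition: it states the result immediately after constructing the duality, the counits, and exhibiting a single worked example of the extranaturality isomorphism, treating the remaining verifications as routine. Your proposal is a correct and considerably more detailed fleshing-out of exactly what the paper leaves implicit, and it follows the same underlying approach---in particular, your treatment of extranaturality via an explicit $\cong_t$-isomorphism is precisely what the paper's single example is meant to suggest.
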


$\D_t (\Sigma)$ is also equipped with an obvious valuation $v_\Sigma : \Sigma \to \D_t (\Sigma)$, taking each morphism symbol $f$ to the diagram containing only one $f$-labelled internal node.

\section{Coherence for teleological diagrams}\label{sec:coherence}

In this section we will prove the following coherence theorem, which states that $\D_t (\Sigma)$ is the free teleological category on $\Sigma$.

\begin{theorem}[Coherence theorem for teleological categories]\label{main-theorem}
	Let $\Sigma$ be a teleological signature, $\C$ a teleological category and $w : \Sigma \to \C$ a valuation.
	Then there exists a teleological functor $F : \D_t (\Sigma) \to \C$ such that $w = F \circ v_\Sigma$, unique up to unique teleological natural isomorphism.
\end{theorem}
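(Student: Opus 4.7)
The strategy is to reduce to the Joyal--Street coherence theorem (Theorem~\ref{joyal-street-theorem}) by converting each teleological diagram into a progressive circuit diagram over an enlarged signature. Define $F$ on objects by $F(x_1^{o_1}\otimes\cdots\otimes x_n^{o_n}) = w(x_1)^{o_1}\otimes\cdots\otimes w(x_n)^{o_n}$, using $-^*$ in $\C$. For the action on morphisms, enlarge $\Sigma$ to a monoidal signature $\hat\Sigma$ whose object symbols are $\obj(\Sigma)\cup\obj(\Sigma)^*$ and whose morphism symbols consist of those of $\Sigma$ together with (i) a formal dual $f^\vee$ for each $f\in\mor_d(\Sigma)$ with domain and codomain interchanged and decorated by $-^*$, and (ii) a counit symbol $\eta_X : X\otimes X^*\to I$ for each word $X$. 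Given a teleological diagram $A$, produce a circuit diagram $A^\sharp$ over $\hat\Sigma$ by cutting each cap (local maximum in the time-direction, guaranteed by Lemma~\ref{lemma-edge-bending}) and inserting an $\eta$-node, then relabelling each reflected dualisable node as a non-reflected $f^\vee$-node. Define $F(A) := \hat F(A^\sharp)$, where $\hat F$ is the symmetric monoidal functor produced by Joyal--Street from the valuation $\hat w$ extending $w$ by $\hat w(\eta_X) = \epsilon_{F(X)}$ and $\hat w(f^\vee) = w(f)^*$.

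The central task is to show that $F(A)$ is independent of the choices made in forming $A^\sharp$, and more generally invariant under $\cong_t$. There are three classes of moves to handle. Smooth deformations of $A$ preserving node orientations and cap positions translate to $\cong_c$-equivalences of $A^\sharp$ and are handled by Joyal--Street. Rearrangements of caps (different cut points, a cap crossing a symmetry, two nested caps coalesced into a single counit on a tensor) produce circuit diagrams whose $\hat F$-images coincide precisely by the two explicit axioms in the definition of a teleological category. Reflection of a dualisable node changes the corresponding block of $A^\sharp$ into one involving $f^\vee$ with caps on the opposite side; the resulting two circuit diagrams are equated in $\C$ by the counit law, i.e.\ the extranaturality of $\epsilon$. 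Once $F$ is well-defined, the conditions of Definition~\ref{def-teleological-functor} are immediate from the construction: compatibility with $-^*$ on dualisable diagrams follows from the definition of $\hat w$ on $f^\vee$, and $F(\epsilon_X) = \epsilon_{F(X)}$ from the definition of $\hat w$ on $\eta_X$. Uniqueness up to unique teleological natural isomorphism then proceeds as in Joyal--Street: any teleological functor $G$ satisfying $G\circ v_\Sigma = w$ is forced to agree with $\hat w$ on every generator of $\hat\Sigma$, hence with $\hat F$ on the rectification and therefore with $F$.

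The main obstacle is the well-definedness step. While each of the moves listed is individually governed by a teleological axiom, a general $\cong_t$-equivalence need not be local; one must reduce an arbitrary such equivalence to a finite sequence of elementary moves. The cleanest route is to establish a canonical form for teleological diagrams, in which every cap is pushed as far forward in time as possible and every dualisable node appears un-reflected, and to show that each $\cong_t$-class contains a unique such representative up to the circuit-diagram isomorphism $\cong_c$ on the resulting progressive part. This reduces coherence for $\D_t(\Sigma)$ to the Joyal--Street theorem applied to its progressive part, together with the two explicit counit axioms and the counit law.
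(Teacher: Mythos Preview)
Your overall strategy---enlarge the signature with dual and counit symbols and reduce to Joyal--Street---is the same as the paper's. The real difference is in the well-definedness step, which you correctly identify as the crux. You propose a canonical form (every dualisable node un-reflected, caps normalised) and claim each $\cong_t$-class contains a unique such representative up to $\cong_c$. The uniqueness half is indeed easy: if $A$ and $B$ both have all parities zero then any $\cong_t$-isomorphism between them reflects nothing and is already a $\cong_c$-isomorphism. But you leave existence unaddressed, and that is exactly where the work sits; your ``caps pushed forward'' condition is also vague and in fact unnecessary, since once rectified to a circuit diagram the cap positions are absorbed by Joyal--Street.

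The paper avoids canonical forms entirely. Given a specific isomorphism $i : A \cong_t B$, it partitions the nodes of $A$ by whether $i$ reflects them. A case analysis of edge variances (Lemmas~\ref{lem-forbidden-variance} and~\ref{lem-reflection-variances}, ultimately resting on Lemma~\ref{lemma-edge-bending}) shows that the reflected nodes assemble into a single \emph{progressive} sub-diagram $Q$; the Normal-form Lemma (Lemma~\ref{normal-form-lemma}) then produces $A' \cong_c A$ and $B' \cong_c B$ that differ only by reflecting $Q$ as a block. One application of extranaturality of $\epsilon$, to the morphism interpreted from $Q$, closes the gap. This sidesteps both the existence of a canonical form and any decomposition into elementary local moves. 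Your approach could be made to work, but proving existence of the canonical form would require essentially the same variance analysis, so the paper's route is more direct. A minor difference: the paper adds counit symbols $\epsilon_x$ only for object symbols, recovering $F(\epsilon_X)=\epsilon_{F(X)}$ for compound $X$ by structural induction from the two counit axioms, whereas your $\eta_X$ for all words front-loads those axioms into the rectification.
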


The following is a summary of the proof method.
Given an isomorphism of teleological diagrams, we show that it can be factored into three parts (lemma \ref{normal-form-lemma}): first translate nodes without reflecting, then reflect a single sub-diagram, then finally translate without reflecting again.
This is proved by a combinatorial analysis of the possible connections in a diagram, ultimately resulting on lemma \ref{lemma-edge-bending}.
For the isomorphisms not involving reflections we appeal to the Joyal-Street coherence theorem (theorem \ref{joyal-street-theorem}) for an expanding signature, and the single reflection is an instance of extranaturality.

We begin by putting lemma \ref{lemma-edge-bending} into a discrete form that is more practical to work with.
Consider a general internal node of a teleological diagram, either dualisable or non-dualisable.
There are four ways in which an edge can connect to the node:
\begin{center} \begin{tikzpicture}
	\node [trapezium, trapezium left angle=0, trapezium right angle=75, shape border rotate=90, trapezium stretches=true, minimum height=1cm, minimum width=2cm, draw] (a) at (0, 0) {$f$};
	\node [anchor=east] (X) at (-1, -.5) {covariant input}; \node [anchor=west] (Y) at (1, -.5) {covariant output};
	\node [anchor=west] (R) at (1, .5) {contravariant input}; \node [anchor=east] (S) at (-1, .5) {contravariant output};
	\draw [->-] (X) to (a.west |- X); \draw [->-] (a.east |- Y) to (Y);
	\draw [->-] (R) to (a.east |- R); \draw [->-] (a.west |- S) to (S);
\end{tikzpicture} \end{center}
Notice that reflecting a dualisable node preserves inputs and outputs, but interchanges covariant and contravariant connections.

\begin{definition}
	Given a directed edge $e$ from an internal node $\alpha$ to an internal node $\beta$, we write $e : \alpha^p \to \beta^q$ where $p, q : \{ +, - \}$, where $+$ represents a covariant connection and $-$ a contravariant connection.
	We extend this notation to external nodes as follows.
	If $\beta$ is an external node in $\R \times \{ a \}$ then we write $\beta^+$ when it is the source node of its (unique) adjacent edge, and $\beta^-$ when it is the target node.
	If $\beta$ is in $\R \times \{ b \}$ then we write $\beta^-$ when it is the source node and $\beta^+$ when it is the target node.
\end{definition}

For example, if the connection between $e$ and $\alpha$ is a covariant output, and the connection between $e$ and $\beta$ is a contravariant input, we write $e : \alpha^+ \to \beta^-$.

\begin{definition}
	Given nodes $\alpha, \beta$ in a teleological diagram we write $\alpha < \beta$ if $\alpha$ strictly precedes $\beta$ in the time direction.
\end{definition}

This defines a preorder on nodes, in which two nodes are incomparable iff they are not equal and are located in the same time-slice.
Of course, this notion is not invariant under equivalences of diagrams.

\begin{lemma}\label{lem-forbidden-variance}
	Let $A$ be a teleological diagram, with a directed edge $e : \alpha \to \beta$.
	\begin{itemize}
		\item If $e : \alpha^+ \to \beta^+$ then $\alpha < \beta$.
		\item If $e : \alpha^- \to \beta^-$ then $\alpha > \beta$.
		\item It is not the case that $e : \alpha^- \to \beta^+$.
	\end{itemize}
\end{lemma}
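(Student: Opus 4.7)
The plan is to translate each combinatorial label $\alpha^\pm$ and $\beta^\pm$ into a sign of the time-derivative of the edge parameterization, and then to conclude directly from Lemma~\ref{lemma-edge-bending}. I would parameterize $e$ smoothly as $t \mapsto (x(t), y(t))$ with $\alpha$ at $t=0$ and $\beta$ at $t=1$. First I would establish the dictionary that, when $\alpha$ is internal, $\alpha^+$ (a covariant output port) corresponds to $y'(0) > 0$ and $\alpha^-$ (contravariant output) to $y'(0) < 0$; dually, $\beta^+$ (covariant input) corresponds to $y'(1) > 0$ and $\beta^-$ (contravariant input) to $y'(1) < 0$. For external nodes I would check that the signs defined in terms of source/target at the two boundaries $\R \times \{a\}$ and $\R \times \{b\}$ still agree with this dictionary: an edge leaving the bottom boundary or arriving at the top boundary must locally point forward in time, while an edge arriving at the bottom or leaving the top must locally point backward.

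Once the dictionary is in place, each of the three cases is a direct application of Lemma~\ref{lemma-edge-bending}, which offers only three possibilities for $y$: strictly increasing, strictly decreasing, or possessing a unique interior maximum with $y$ increasing before and decreasing after. For $e : \alpha^+ \to \beta^+$ the hypotheses $y'(0) > 0$ and $y'(1) > 0$ rule out both the decreasing option and the interior-maximum option (the latter would force $y'(1) < 0$), leaving only strictly increasing; hence $y(0) < y(1)$, i.e.\ $\alpha < \beta$. The case $e : \alpha^- \to \beta^-$ is symmetric, with both signs negative forcing $y$ to be strictly decreasing and therefore $\alpha > \beta$. For $e : \alpha^- \to \beta^+$, the signs $y'(0) < 0$ and $y'(1) > 0$ would compel $y$ to attain a local minimum in $(0, 1)$, directly contradicting the teleological condition that any stationary point of $y$ is not a minimum.

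The main obstacle is the careful bookkeeping of sign conventions, particularly confirming that the purely combinatorial $\pm$-labels attached to external nodes (which are defined in terms of source-vs-target relative to the two boundaries rather than in terms of covariance of ports) translate correctly into the tangent-sign dictionary that works uniformly for internal nodes. Once this translation is in place, the proof reduces to a transparent case split on the possibilities furnished by Lemma~\ref{lemma-edge-bending}, combined with the observation that the forbidden case is precisely the one the teleological condition was designed to exclude.
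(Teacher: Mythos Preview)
Your proposal is correct and is precisely the argument the paper has in mind: the paper's proof is the single line ``Follows from lemma~\ref{lemma-edge-bending},'' and you have unpacked that appeal by matching the $\pm$ port labels to the local monotonicity of $y$ near the endpoints and then eliminating cases against the trichotomy of Lemma~\ref{lemma-edge-bending}. Your extra care about external nodes and the direct appeal to the teleological condition in the third case are reasonable elaborations, but the underlying route is the same.
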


\begin{proof}
	Follows from lemma \ref{lemma-edge-bending}.
\end{proof}

\begin{definition}
	Given an equivalence $i : A \cong_t B$ and a node $\alpha$ in $A$, if $\pi (\alpha) \neq \pi (i (\alpha))$ we write $i \reflects \alpha$ and say that $i$ \emph{reflects} $\alpha$; otherwise, we write $i \not\reflects \alpha$.
\end{definition}

Note that $i \not\reflects \alpha$ whenever $\alpha$ is a non-dualisable internal node or an external node.

\begin{lemma}\label{lem-reflection-variances}
	Let $A$ and $B$ be teleological diagrams and $i : A \cong_t B$, and let $e : \alpha \to \beta$ be an edge in $A$.
	If $i \not\reflects \alpha$ and $i \reflects \beta$ then one of the following is the case:
	\begin{itemize}
		\item $e : \alpha^+ \to \beta^+$, $i (e) : i (\alpha)^+ \to i (\beta)^-$ and $\alpha < \beta$
		\item $e : \alpha^+ \to \beta^-$, $i (e) : i (\alpha)^+ \to i (\beta)^+$ and $i (\alpha) < i (\beta)$
	\end{itemize}
	If $i \reflects \alpha$ and $i \not\reflects \beta$ then one of the following is the case:
	\begin{itemize}
		\item $e : \alpha^- \to \beta^-$, $i (e) : i (\alpha)^+ \to i (\beta)^-$ and $\alpha > \beta$
		\item $e : \alpha^+ \to \beta^-$, $i (e) : i (\alpha)^- \to i (\beta)^-$ and $i (\alpha) > i (\beta)$
	\end{itemize}
	If $i \reflects \alpha$ and $i \reflects \beta$ then one of the following is the case:
	\begin{itemize}
		\item $e : \alpha^+ \to \beta^+$, $i (e) : i (\alpha)^- \to i (\beta)^-$, $\alpha < \beta$ and $i (\alpha) > i (\beta)$
		\item $e : \alpha^- \to \beta^-$, $i (e) : i (\alpha)^+ \to i (\beta)^+$, $\alpha > \beta$ and $i (\alpha) < i (\beta)$
	\end{itemize}
\end{lemma}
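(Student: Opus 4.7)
The plan is to carry out a direct case analysis whose only preliminary ingredient is a single geometric observation: reflecting a dualisable node flips the visual variance ($+ \leftrightarrow -$) at every one of its adjacent ports, while preserving source/target. First I would verify this observation by matching the combinatorial swap prescribed in the $\pi(\alpha) \neq \pi(i(\alpha))$ clause of the $\cong_t$ definition (input edges become outputs in the type listing, and every orientation $o$ is sent to $o^*$) against the four possible port types. A covariant input port, namely (input, $o{=}$nothing), has visual variance $+$, and its image under the swap is (output, $o{=}*$), i.e.\ a contravariant output, which has visual variance $-$; the three other cases are symmetric. So reflection at $\alpha$ sends every $\alpha^p$ to $\alpha^{-p}$. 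The same table also shows that (input, nothing) and (output, $*$) are both target-ports for the edge, and (input, $*$) and (output, nothing) are both source-ports, so the source/target orientation of the edge is preserved under reflection.

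With this preliminary in hand, the proof amounts to enumerating the four possible values of $(p,q)$ for $e : \alpha^p \to \beta^q$ in $A$ and reading off the type $i(e) : i(\alpha)^{p'} \to i(\beta)^{q'}$ in $B$ by the rule $p' = -p$ when $i \reflects \alpha$ and $p' = p$ otherwise, similarly for $q$. Lemma \ref{lem-forbidden-variance} is then applied twice: once to $e$ as an edge of $A$, once to $i(e)$ as an edge of $B$. This simultaneously forbids certain sub-cases and extracts the time-ordering consequences for the surviving ones.

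In Case 1 ($i \not\reflects \alpha$, $i \reflects \beta$), Lemma \ref{lem-forbidden-variance} rules out $e : \alpha^- \to \beta^+$ outright; the sub-case $e : \alpha^- \to \beta^-$ would produce $i(e) : i(\alpha)^- \to i(\beta)^+$, which is forbidden in $B$, so this sub-case does not occur. The two survivors are $\alpha^+ \to \beta^+$ (forcing $\alpha < \beta$ via Lemma \ref{lem-forbidden-variance} applied to $e$, with $i(e) : i(\alpha)^+ \to i(\beta)^-$ unconstrained) and $\alpha^+ \to \beta^-$ (unconstrained for $e$, with $i(e) : i(\alpha)^+ \to i(\beta)^+$ forcing $i(\alpha) < i(\beta)$). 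Case 2 is identical after interchanging the roles of $\alpha$ and $\beta$. In Case 3, where $i$ reflects both endpoints, variance flips twice; $e : \alpha^+ \to \beta^-$ now sends to the forbidden $i(e) : i(\alpha)^- \to i(\beta)^+$, so only $\alpha^+ \to \beta^+$ and $\alpha^- \to \beta^-$ survive, each yielding one strict time-ordering on $A$'s side and the opposite one on $B$'s side.

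The main obstacle, to the extent that there is one, is the preliminary variance-flipping observation, which is conceptually trivial but must be extracted from the combinatorial definition of $\cong_t$ rather than taken for granted geometrically. Once it is in place, all remaining work is a short table of eight possibilities, six of which are either eliminated by Lemma \ref{lem-forbidden-variance} applied to $e$ or by the same lemma applied to $i(e)$, leaving precisely the two surviving sub-cases listed in the lemma's statement for each of Cases 1--3.
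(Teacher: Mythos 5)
Your proposal is correct and follows essentially the same route as the paper: enumerate the three admissible variances of $e$ (with $\alpha^- \to \beta^+$ excluded by lemma \ref{lem-forbidden-variance}), observe that the one remaining unlisted variance would force $i(e) : i(\alpha)^- \to i(\beta)^+$, which is likewise forbidden, and read off the orderings from the same lemma; your extra preliminary step making the variance-flip explicit from the combinatorial definition of $\cong_t$ is a welcome elaboration of what the paper only remarks informally. One cosmetic slip: the image of a covariant input under reflection (codomain entry decorated with $*$) is a contravariant \emph{input}, not a contravariant output, though your substantive conclusions (variance $-$, target port) are the correct ones.
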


\begin{proof}
	For each case we consider the 3 possible variances of $e : \alpha \to \beta$, with $e : \alpha^- \to \beta^+$ ruled out by lemma \ref{lem-forbidden-variance}.
	In each case the remaining variance not listed would imply that $i (e) : i (\alpha)^- \to i (\beta)^+$, which is again disallowed.
	The ordering constraints on the permitted cases also follow from lemma \ref{lem-forbidden-variance}.
\end{proof}

Examples of these three cases are illustrated in figure \ref{fig:reflection-variances}.

\begin{figure}
	\begin{center} \begin{tikzpicture}
		\node [rectangle, minimum height=2cm, minimum width=1cm, draw] (f1) at (0, 0) {$f$};
		\node [trapezium, trapezium left angle=0, trapezium right angle=75, shape border rotate=90, trapezium stretches=true, minimum height=1cm, minimum width=2cm, draw] (g1) at (2, 0) {$g$};
		\draw [->-] (f1) to node [above] {$x$} (g1);
		\node at (4, 0) {$\cong_t$};
		\node [rectangle, minimum height=2cm, minimum width=1cm, draw] (f2) at (6, -1) {$f$};
		\node [trapezium, trapezium left angle=75, trapezium right angle=0, shape border rotate=270, trapezium stretches=true, minimum height=1cm, minimum width=2cm, draw] (g2) at (6, 1) {$g$};
		\draw [->-] (f2) to [out=0, in=-90] (7.5, 0) to [out=90, in=0] node [right, very near start] {$x$} (g2);
		\node [rectangle, minimum height=2cm, minimum width=1cm, draw] (g3) at (0, -5) {$g$};
		\node [trapezium, trapezium left angle=0, trapezium right angle=75, shape border rotate=90, trapezium stretches=true, minimum height=1cm, minimum width=2cm, draw] (f3) at (2, -5) {$f$};
		\draw [->-] (f3) to node [above] {$x$} (g3);
		\node at (4, -5) {$\cong_t$};
		\node [rectangle, minimum height=2cm, minimum width=1cm, draw] (g4) at (6, -6) {$g$};
		\node [trapezium, trapezium left angle=75, trapezium right angle=0, shape border rotate=270, trapezium stretches=true, minimum height=1cm, minimum width=2cm, draw] (f4) at (6, -4) {$f$};
		\draw [->-] (f4) to [out=0, in=90] node [right, very near end] {$x$} (7.5, -5) to [out=-90, in=0] (g4);
		\node [trapezium, trapezium left angle=0, trapezium right angle=75, shape border rotate=90, trapezium stretches=true, minimum height=1cm, minimum width=2cm, draw] (f5) at (0, -9) {$f$};
		\node [trapezium, trapezium left angle=75, trapezium right angle=0, shape border rotate=270, trapezium stretches=true, minimum height=1cm, minimum width=2cm, draw] (g5) at (2, -9) {$g$};
		\draw [->-] (f5) to node [above] {$x$} (g5);
		\node at (4, -9) {$\cong_t$};
		\node [trapezium, trapezium left angle=75, trapezium right angle=0, shape border rotate=270, trapezium stretches=true, minimum height=1cm, minimum width=2cm, draw] (f6) at (8, -9) {$f$};
		\node [trapezium, trapezium left angle=0, trapezium right angle=75, shape border rotate=90, trapezium stretches=true, minimum height=1cm, minimum width=2cm, draw] (g6) at (6, -9) {$g$};
		\draw [->-] (f6) to node [above] {$x$} (g6);
	\end{tikzpicture} \end{center}
	\caption{Cases of lemma \ref{lem-reflection-variances}}
	\label{fig:reflection-variances}
\end{figure}

\begin{definition}\label{def:teleological-circuit-isomorphism}
	Let $\Sigma$ be a teleological signature and $A$, $B$ teleological diagrams over $\Sigma$.
	We write $i : A \cong_c B$ if $i : A \cong_t B$ and $i \not\reflects \alpha$ for all nodes $\alpha$ of $A$.
\end{definition}

\begin{lemma}[Normal-form lemma]\label{normal-form-lemma}
	Let $\Sigma$ be a teleological signature, and let $A$ and $B$ be teleological diagrams over $\Sigma$ with $A \cong_t B$.
	Then there exist teleological diagrams $A' \cong_c A$ and $B' \cong_c B$ such that $A'$ and $B'$ are respectively of the form depicted in figure \ref{fig:normal-form-lemma}, 	where $P$ and $Q$ are sub-diagrams with $Q$ in $\D_t (\Sigma)_d$.
	Note that the particular edge orientations depicted are for illustration, and that the reflected $Q$ in the diagram for $B'$ refers to the dual $Q^*$, i.e. the diagram obtained by reflecting the diagram $Q$.
\end{lemma}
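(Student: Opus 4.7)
The plan is to decompose the isomorphism $i : A \cong_t B$ by isolating the set of reflected nodes, and to use the variance constraints of lemma \ref{lem-reflection-variances} to topologically separate them from the non-reflected nodes, all without invoking any reflections at intermediate stages. First I let $R = \{\alpha \in A : i \reflects \alpha\}$. Every node in $R$ is a dualisable internal node, so the sub-diagram on $R$ (with only its internal edges) will be a candidate for $Q \in \D_t(\Sigma)_d$, with $P$ being the sub-diagram on the complement.

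Next I analyse the edges of $A$ by the reflection status of their endpoints. By lemma \ref{lem-reflection-variances}, an edge $e : \alpha \to \beta$ with $\alpha \notin R$ and $\beta \in R$ must have $e : \alpha^+ \to \beta^+$ or $e : \alpha^+ \to \beta^-$; in either case it emanates from a covariant output side of $\alpha$. Symmetrically, an edge from $R$ back to the complement must terminate at a contravariant output. Edges internal to $R$ are either $\alpha^+ \to \beta^+$ or $\alpha^- \to \beta^-$, with time-orderings that flip under $i$. This restrictive ``cap-like'' structure of the interface between $P$ and $R$ is what makes the rest of the argument possible.

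The core step is then to prove that $A$ admits a $\cong_c$-equivalent redrawing $A'$ in which the nodes of $R$ are gathered into a contiguous region to the right of the nodes of $P$, connected only by the cap-like interface edges identified above, and in which the subgraph on $R$ is rearranged so that the induced ordering matches the one imposed by $i$ (which reverses the internal ordering via the variance constraints on edges inside $R$). The construction is by a progressive rearrangement: each time a node of $R$ is to the left of a node of $P$ with which it is edge-connected, the variance constraints guarantee that the connecting edge already has the form needed to be pulled into a cap, so one can slide the $R$-node rightward without introducing a forbidden backward bend. Since this entire procedure only moves internal nodes and redraws edges, it is a sequence of $\cong_c$-isomorphisms, and the resulting diagram has the required normal form. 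The same argument applied to $i^{-1} : B \cong_t A$ produces $B' \cong_c B$ in which $i(R)$ is gathered on the right; the variance rules from lemma \ref{lem-reflection-variances} then certify that the sub-diagram on $i(R)$ in $B'$ is exactly $Q^*$, and that the cap-shaped interface in $B'$ is obtained from the one in $A'$ by reversing each connecting edge to point to the reflected side of $Q^*$.

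The main obstacle is the progressive rearrangement in step three: one must verify that the required node moves can always be carried out without breaking either the teleological condition ($y''(t)\leq 0$ at maxima) or the $\cong_c$ constraint, and that the interface edges can consistently be realised as caps despite potentially complicated planar interleaving with edges internal to $P$ or internal to $Q$. I would handle this by performing the rearrangement one $R$-node at a time in a bottom-up fashion dictated by the $i$-reversed ordering inside $R$, and by appealing to lemma \ref{lemma-edge-bending} to argue that each connecting edge can be straightened into a single cap once the offending node has been moved past the vertical slice separating $P$ from $Q$.
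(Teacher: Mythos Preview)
Your high-level strategy matches the paper's: partition the nodes of $A$ into those that $i$ reflects and those it does not, use lemma \ref{lem-reflection-variances} to classify every edge crossing the partition, and assemble $P$ and $Q$ accordingly. The edge analysis you give is exactly what the paper does (modulo a slip: an edge from $R$ to the complement terminates at a contravariant \emph{input} of $\beta$, not an output).

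Where you diverge is in the execution of the ``core step.'' You frame the passage from $A$ to $A'$ as a geometric \emph{progressive rearrangement}, sliding $R$-nodes rightward one at a time and worrying about whether each intermediate move preserves the teleological condition. This is harder than necessary and is the source of the obstacle you flag. The paper instead exploits the fact that $\cong_c$ (definition \ref{def:teleological-circuit-isomorphism}) is purely combinatorial: a label- and orientation-preserving bijection of nodes and edges with no reflections. So one can simply \emph{build} $A'$ directly---lay down $P$ and $Q$ side by side with the cap interface, preserving the relative positions of nodes \emph{within} each sub-diagram---and then exhibit the obvious bijection $A \to A'$. No isotopy or stepwise deformation is required; one only has to check that $A'$ is a valid teleological diagram and that the bijection satisfies the $\cong_c$ clauses, both of which are immediate from the variance constraints already established.

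A related point: you say the sub-diagram on $R$ must be ``rearranged so that the induced ordering matches the one imposed by $i$.'' This is not needed. The paper keeps the original relative ordering from $A$ inside $Q$; since every internal edge of $Q$ is of the form $\alpha^+ \to \beta^+$ with $\alpha < \beta$ or $\alpha^- \to \beta^-$ with $\alpha > \beta$, $Q$ is automatically progressive with that ordering, so $Q \in \D_t(\Sigma)_d$. The $i$-reversed ordering only becomes relevant when you reflect $Q$ to form $B'$.
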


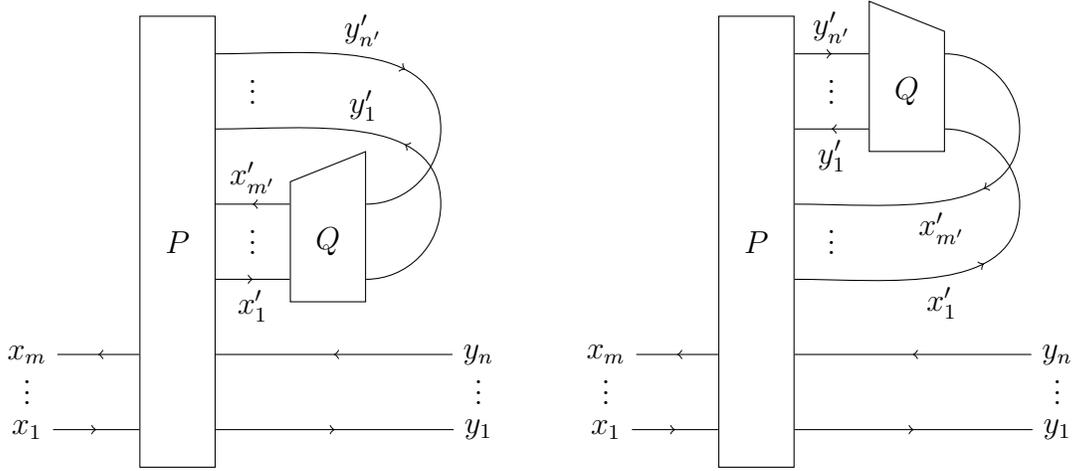
\begin{figure}
	\begin{center}
		\begin{tikzpicture}
			\node [rectangle, minimum height=6cm, minimum width=1cm, draw] (P) at (0, 0) {$P$};
			\node [trapezium, trapezium left angle=0, trapezium right angle=75, shape border rotate=90, trapezium stretches=true, minimum height=1cm, minimum width=2cm, draw] (Q) at (2, 0) {$Q$};
			\node (X) at (-2, -2.5) {$x_1$}; \node (S) at (-2, -1.5) {$x_m$};
			\node (Y) at (4, -2.5) {$y_1$}; \node (R) at (4, -1.5) {$y_n$};
			\node at (-2, -1.9) {$\vdots$}; \node at (4, -1.9) {$\vdots$};
			\node at (1, 0.1) {$\vdots$}; \node at (1, 2.1) {$\vdots$};
			\node (d1) at (0, -.5) {}; \node (d2) at (0, .5) {}; \node (d3) at (0, 1.5) {}; \node (d4) at (0, 2.5) {};
			\draw [->-] (X) to (P.west |- X); \draw [->-] (P.west |- S) to (S);
			\draw [->-] (P.east |- Y) to (Y); \draw [->-] (R) to (P.east |- R);
			\draw [->-] (P.east |- d1) to node [below] {$x'_1$} (Q.west |- d1); \draw [->-] (Q.west |- d2) to node [above] {$x'_{m'}$} (P.east |- d2);
			\draw [->-] (Q.east |- d1) to [out=0, in=-90] (3.5, .5) to [out=90, in=0] node [above] {$y'_1$} (P.east |- d3);
			\draw [->-] (P.east |- d4) to [out=0, in=90] node [above] {$y'_{n'}$} (3.5, 1.5) to [out=-90, in=0] (Q.east |- d2);
		\end{tikzpicture}
		\qquad
		\begin{tikzpicture}
			\node [rectangle, minimum height=6cm, minimum width=1cm, draw] (P) at (0, 0) {$P$};
			\node [trapezium, trapezium left angle=75, trapezium right angle=0, shape border rotate=270, trapezium stretches=true, minimum height=1cm, minimum width=2cm, draw] (Q) at (2, 2) {$Q$};
			\node (X) at (-2, -2.5) {$x_1$}; \node (S) at (-2, -1.5) {$x_m$};
			\node (Y) at (4, -2.5) {$y_1$}; \node (R) at (4, -1.5) {$y_n$};
			\node at (-2, -1.9) {$\vdots$}; \node at (4, -1.9) {$\vdots$};
			\node at (1, 0.1) {$\vdots$}; \node at (1, 2.1) {$\vdots$};
			\node (d1) at (0, -.5) {}; \node (d2) at (0, .5) {}; \node (d3) at (0, 1.5) {}; \node (d4) at (0, 2.5) {};
			\draw [->-] (X) to (P.west |- X); \draw [->-] (P.west |- S) to (S);
			\draw [->-] (P.east |- Y) to (Y); \draw [->-] (R) to (P.east |- R);
			\draw [->-] (P.east |- d1) to [out=0, in=-90] node [below] {$x'_1$} (3.5, .5) to [out=90, in=0] (Q.east |- d3);
			\draw [->-] (Q.east |- d4) to [out=0, in=90] (3.5, 1.5) to [out=-90, in=0] node [below] {$x'_{m'}$} (P.east |- d2);
			\draw [->-] (Q.west |- d3) to node [below] {$y'_1$} (P.east |- d3); \draw [->-] (P.east |- d4) to node [above] {$y'_{n'}$} (Q.west |- d4);
		\end{tikzpicture}
	\end{center}
	\caption{$A'$ and $B'$ in lemma \ref{normal-form-lemma}}
	\label{fig:normal-form-lemma}
\end{figure}

\begin{proof}
	Let $i : A \cong_t B$ be the equivalence.
	We will construct $P$ and $Q$ such that $P$ contains the nodes $\alpha$ of $A$ with $i \not\reflects \alpha$, and $Q$ contains those with $i \reflects \alpha$.
	The external nodes of $A$ are of the former kind, and add them as external nodes to $P$.
	(These are the $x_i$, $y_j$ in figure \ref{fig:normal-form-lemma}.)
	We preserve the relative positions of the nodes within these sub-diagrams.
	We must check all of the ways in which nodes may be connected.
	
	Any edge $e : \alpha \to \beta$ in $A$ between nodes of $P$ is also added to the diagram $P$.
	Similarly we add any edge in $A$ between nodes of $Q$ to $Q$.
	In the latter case, by lemma \ref{lem-reflection-variances} it must be the case that either $e : \alpha^+ \to \beta^+$ or $e : \alpha^- \to \beta^-$.
	
	For each edge $e : \alpha \to \beta$ for $\alpha$ in $P$ and $\beta$ in $Q$, by lemma \ref{lem-reflection-variances} either $e : \alpha^+ \to \beta^+$ or $e : \alpha^+ \to \beta^-$.
	In both cases we add an edge $e_p : \alpha^+ \to p (e)^+$ to $P$, for $p (e)$ a fresh covariant output external node of $P$.
	In the former case we also add an edge $e_q : q (e)^+ \to \beta^+$ to $Q$, for $q (e)$ a fresh covariant input external node of $Q$, and an edge $p (e)^+ \to q (e)^+$ to $A'$ (of the form $x'_1$ in figure \ref{fig:normal-form-lemma}).
	In the latter case we add $e_q : q (e)^- \to \beta^-$, for $q (e)$ a fresh contravariant input external node of $Q$, and an edge $p (e)^+ \to q (e)^-$ to $A'$ (of the form $y'_{n'}$).
	
	In the dual case $e : \alpha \to \beta$ for $\alpha$ in $Q$ and $\beta$ in $P$, by lemma \ref{lem-reflection-variances} either $e : \alpha^- \to \beta^-$ or $e : \alpha^+ \to \beta^-$.
	In either case we add an edge $e_p : p(e)^- \to \beta^-$, for $p (e)$ a fresh contravariant input external node of $P$.
	In the former case we also add an edge $e_q : \alpha^- \to q (e)^-$ to $Q$, for $q (e)$ a fresh contravariant output external node of $Q$, and an edge $q (e)^- \to p (e)^-$ to $A'$ (of the form $x'_{m'}$).
	In the latter case we add $e_q : \alpha^+ \to q (e)^+$, for $q (e)$ a fresh covariant output external node of $Q$, and an edge $q (e)^+ \to p (e)^-$ to $A'$ (of the form $y'_1$).
	
	Considering external nodes of $A$ as external nodes of $P$ means that an edge $e : \alpha \to \beta$ for $\alpha$ a covariant input external node of $A$ (of the form $x_1$) and $\beta$ in $Q$ leads to a `bypass' edge $e_p : \alpha^+ \to e (p)^+$ in $P$ that does not connect to any internal node of $P$; similarly for $e : \alpha \to \beta^-$ for $\alpha$ in $Q$ and $\beta$ a contravariant output external node of $A$ (of the form $x_m$).
	Furthermore, any edge (in either direction) between a node of $Q$ and either a covariant output or contravariant input external node of $A$ would necessarily be either of the form $e : \alpha^+ \to \beta^+$ with $i \reflects \alpha$ and $i \not\reflects \beta$, or $e : \alpha^- \to \beta^-$ with $i \not\reflects \alpha$ and $i \reflects \beta$, both of which are ruled out by lemma \ref{lem-reflection-variances}.
	
	All edges added to $Q$ by this construction are of the form $\alpha^+ \to \beta^+$ or $\alpha^- \to \beta^-$.
	It follows that $Q$ satisfies the progressive condition.
	
	We have $A \cong_c A'$ by construction, and it is also straightforward to see that $B \cong_c B'$.
\end{proof}

\begin{definition}
	Let $\Sigma$ be a teleological signature.
	We define a monoidal signature $M (\Sigma)$ as follows.
	The object symbols are
	\[ \obj (M (\Sigma)) = \obj (\Sigma) \cup \obj (\Sigma)^* = \obj (\Sigma) \cup \{ x^* \mid x : \obj (\Sigma) \} \]
	and the morphism symbols are
	\[ \mor (M (\Sigma)) = \mor (\Sigma) \cup \mor_d (\Sigma)^* \cup \{ \epsilon_x \mid x : \obj (\Sigma) \} \]
	If $f : x_1^{o_1} \otimes \cdots \otimes x_m^{o_m} \to y_1^{o'_1} \otimes \cdots \otimes y_n^{o'_n}$ in $\Sigma$ then it also has this type in $M (\Sigma)$, together with
	\[ f^* : y_1^{o'_1 *} \otimes \cdots \otimes y_n^{o'_n *} \to x_1^{o_1 *} \otimes \cdots \otimes x_m^{o_m *} \]
	with $x^{**} = x$ as a syntactic equality.
	Finally, we set $\epsilon_x : x \otimes x^* \to I$ for each $x : \obj (\Sigma)$.
\end{definition}

\begin{lemma}
	Let $\Sigma$ be a teleological signature.
	There is an identity-on-objects symmetric monoidal functor $E : \D_c (M (\Sigma)) \to U (\D_t (\Sigma))$ defined by embedding $\cong_c$-equivalence classes into $\cong_t$-equivalence classes, where $U$ is the forgetful functor $\TelCat \to \SymMonCat$.
\end{lemma}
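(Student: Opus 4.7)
The plan is to construct $E$ explicitly on representatives and check both well-definedness on $\cong_c$-classes and preservation of the symmetric monoidal structure. On objects both categories have $(\obj(\Sigma) \cup \obj(\Sigma)^*)$-words as their object sets, so $E$ is the identity there by definition.

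On a representative circuit diagram $A$ over $M(\Sigma)$, I build the underlying teleological diagram $E(A)$ over $\Sigma$ as follows. Each edge with label $x \in \obj(\Sigma)$ is kept with forward orientation, and each edge with label $x^*$ has its $*$ stripped and is assigned backward orientation. Each internal node $\alpha$ is then transformed according to its $M(\Sigma)$-label: if $\alpha$ is labelled by $f \in \mor(\Sigma) \setminus \mor_d(\Sigma)$ it becomes a non-dualisable node labelled $f$; if labelled by $f \in \mor_d(\Sigma)$, a dualisable node with parity $0$; if labelled by $f^* \in \mor_d(\Sigma)^*$, a dualisable node with the same underlying label $f$ but parity $1$; and if labelled $\epsilon_x$, the node is deleted and its two adjacent edges are spliced into a single bent edge labelled $x$, forming a cap. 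The $\dom$ and $\cod$ specifications in $M(\Sigma)$ were set up to match exactly the orientation patterns required by each of the four teleological node shapes, so the result satisfies all label-and-orientation constraints of a teleological diagram.

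I then verify the teleological condition for $E(A)$: because $A$ is progressive, every edge of $A$ is monotone in the time direction, and each spliced cap has a unique local maximum at the position of the removed $\epsilon_x$-node, which is allowed since only local minima are forbidden by the teleological condition (in the form extracted from lemma \ref{lemma-edge-bending}). For well-definedness, given $i : A \cong_c B$ in $\D_c(M(\Sigma))$, I define $E(i)$ by transporting $i$ through the above construction: non-$\epsilon$-labelled nodes map to their $i$-images under the same rule, and each $\epsilon_x$-labelled node $\alpha$ of $A$ (which $i$ must send to an $\epsilon_x$-labelled node of $B$ because $i$ is label-preserving) induces a canonical bijection between the spliced caps of $E(A)$ and $E(B)$. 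The ordering, variance and boundary conditions from definition \ref{def:teleological-circuit-isomorphism} then hold because the corresponding conditions for $i$ in definition \ref{def:circuit-diagram-isomorphism} encode exactly the same information in the starred-label convention; in particular no reflections are introduced, so $E(i) \not\reflects \alpha$ for every $\alpha$.

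Symmetric monoidal functoriality is then routine: composition and tensor product in $\D_c(M(\Sigma))$ are gluing along a boundary and side-by-side juxtaposition, and both operations commute with $E$ because each $\epsilon_x$-splicing is strictly local to a single node and is unaffected by the gluing. The main obstacle is the bookkeeping around the $\epsilon_x$-replacement step: one has to check that the merged edge carries a well-defined label and a globally consistent orientation (forward on one side of the cap, backward on the other), that splicing preserves the external-node orderings on the top and bottom boundaries, and that the induced bijection between $E(A)$ and $E(B)$ on edges respects adjacency and variance, despite the edge-counts of $E(A)$ and $A$ differing by the number of $\epsilon$-nodes. Everything else reduces to routine unpacking of definitions.
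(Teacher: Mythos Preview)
The paper states this lemma without proof, so there is nothing to compare against directly; your proposal supplies exactly the kind of explicit construction the paper leaves implicit, and it is correct. It is worth noting that what you build is precisely the inverse of the operation $C$ that the paper defines immediately after this lemma (turning caps into $\epsilon_x$-nodes, backward-oriented $x$-edges into forward $x^*$-edges, and parity-$1$ $f$-nodes into $f^*$-nodes), so your $E$ and the paper's $C$ are mutually inverse up to $\cong_c$, which is the content of lemma~\ref{lemma-c-isomorphisms}. One small point you might make explicit: at a spliced cap the parameterisation must be chosen so that the $x$-coordinate is strictly monotone in a neighbourhood of the maximum, as required by lemma~\ref{lemma-edge-bending}; this is the same fussiness the paper flags in its parenthetical remark when defining $C$.
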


\begin{definition}
	Let $\Sigma$ be a teleological signature and $A$ a teleological diagram over $\Sigma$.
	We define a circuit diagram $C (A)$ over $M (\Sigma)$ as follows.
	For each node $\alpha$ of $A$ labelled by a morphism symbol $f$ we add a node at the same position in $C (A)$ labelled by $f$ if $\pi (\alpha) = 0$, and labelled by $f^*$ if $\pi (\alpha) = 1$.
	For each edge $e$ labelled by an object symbol $x$, let $t \mapsto (x (t), y (t))$ be a smooth parameterisation, and case split on lemma \ref{lemma-edge-bending}.
	\begin{itemize}
		\item If $y$ is strictly increasing, we add an edge in $C (A)$ with the same parameterisation, labelled by $x$.
		\item If $y$ is strictly decreasing, we add an edge in $C (A)$ with the reversed parameterisation $t \mapsto (x (-t), y (-t))$, labelled by $x^*$.
		\item In the remaining case, let $t$ be the parameter of the turning point.
		Add a new node at position $(x (t), y (t))$, labelled by the morphism symbol $\epsilon_x$.
		Add an edge from $(x (0), y (0))$ to the new node labelled by $x$, and from $(x (1), y (1))$ to the new node labelled by $x^*$.
		(The new edges can follow the same path as the original one, although if $x (t)$ is locally decreasing at $(x (t), y (t))$ we need to slightly alter the parameterisation, so that the $x$-labelled edge enters below and the $x^*$-labelled edge above, to match the type $\epsilon_x : x \otimes x^* \to I$.)
	\end{itemize}
\end{definition}

An example of this construction is illustrated in figure \ref{fig-circuit-construction}.

\begin{figure}
	\begin{center} \begin{tikzpicture}[auto]
		\node [trapezium, trapezium left angle=0, trapezium right angle=75, shape border rotate=90, trapezium stretches=true, minimum height=1cm, minimum width=2cm, draw] (f) at (0, 2.25) {$f$};
		\node [trapezium, trapezium left angle=75, trapezium right angle=0, shape border rotate=270, trapezium stretches=true, minimum height=1cm, minimum width=2cm, draw] (g) at (0, -.25) {$g$};
		\draw [->-] (f) to [out=0, in=90] (1.5, 1) to [out=-90, in=0] node {$x$} (g);
		\node at (3, 1) {$\mapsto$};
		\node [rectangle, minimum height=1.75cm, minimum width=1cm, draw] (f') at (5, 2) {$f$};
		\node [rectangle, minimum height=1.75cm, minimum width=1cm, draw] (g') at (5, 0) {$g^*$};
		\node [rectangle, minimum height=1.75cm, minimum width=1cm, draw] (e) at (7, 1) {$\epsilon_x$};
		\node (d1) at (0, 1.5) {}; \node (d2) at (0, .5) {};
		\draw [-] (f') to [out=0, in=180] node [above=5pt, near start] {$x$} (e.west |- d2);
		\draw [-] (g') to [out=0, in=180] node [below, near start] {$x^*$} (e.west |- d1);
	\end{tikzpicture} \end{center}
	\caption{Example construction of $C (A)$}
	\label{fig-circuit-construction}
\end{figure}
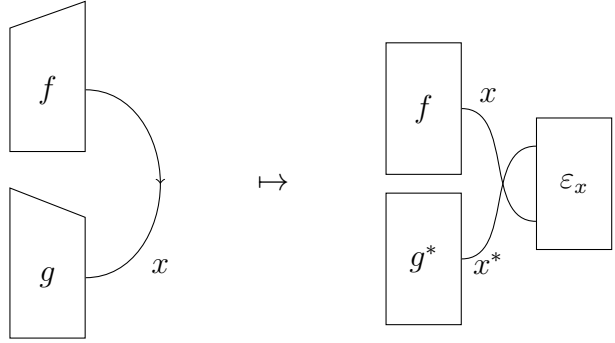

\begin{lemma}\label{lemma-c-isomorphisms}
	Let $A$ and $B$ be teleological diagrams.
	Then $A \cong_c B$ iff $C (A) \cong_c C (B)$.
\end{lemma}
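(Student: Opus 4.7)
The plan is to prove both directions by tracking how the construction $C$ encodes the parity data of nodes (as a choice of label $f$ versus $f^*$) and the bending data of edges (as $\epsilon_x$-nodes), and then showing that a circuit-diagram isomorphism of the images is forced to respect both pieces of data.

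For the forward direction, suppose $i : A \cong_c B$, so $i$ reflects no node and hence preserves the parity function. Define $C(i)$ on nodes as follows: every internal node $\alpha$ of $A$ corresponds to exactly one non-$\epsilon$ node of $C(A)$, labelled $f$ or $f^*$ according to $\pi(\alpha)$; since $i$ preserves labels and parities, $i(\alpha)$ yields a node in $C(B)$ with the same label, and external nodes correspond trivially. For edges, the definition of $\cong_c$ for teleological diagrams (Definition~\ref{def:teleological-circuit-isomorphism} together with Definition~\ref{def:circuit-diagram-isomorphism} restricted to the no-reflection case) guarantees that the orientation markers $o_i$ at every node and at the boundary agree between $A$ and $B$, so the three cases of Lemma~\ref{lemma-edge-bending} line up identically under $i$: monotone edges map to monotone edges with matching object label, and bending edges map to bending edges, each introducing a unique $\epsilon_x$-node together with its two adjacent edges labelled $x$ and $x^*$. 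Extending $C(i)$ by this bijection on introduced pieces, one verifies the three conditions of Definition~\ref{def:circuit-diagram-isomorphism} for $C(i) : C(A) \cong_c C(B)$ directly from the corresponding conditions on $i$.

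For the backward direction, suppose $j : C(A) \cong_c C(B)$. Since $j$ preserves labels and no morphism symbol of $\Sigma$ or $\Sigma^*$ in $M(\Sigma)$ equals any $\epsilon_x$, the bijection $j$ restricts to a bijection between the $\epsilon$-nodes of $C(A)$ and those of $C(B)$, and between the non-$\epsilon$ nodes likewise. The non-$\epsilon$ nodes of $C(A)$ are in label-preserving bijection with the internal nodes of $A$ together with their parities (parity $0$ if the $C$-label is $f$, parity $1$ if it is $f^*$); similarly for $B$. This produces a label- and parity-preserving bijection $i$ on internal nodes, and $j$ restricted to external nodes gives the external part of $i$. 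To reconstruct $i$ on edges, observe that the construction of $C$ presents every edge of $A$ either as a single edge of $C(A)$ or as a pair of edges meeting at an $\epsilon_x$-node in the order $x \otimes x^* \to I$; because $j$ preserves the input order and labelling at each internal node, the corresponding pair in $C(B)$ likewise glues at an $\epsilon_x$-node in the same order, so it arises from a unique bending edge of $B$. This defines $i$ on edges as a label-preserving bijection.

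It remains to check that $i$ satisfies the three clauses of Definition~\ref{def:teleological-circuit-isomorphism} with no node reflected. Preservation of adjacency at external nodes and of variance at each internal node follows from the corresponding conditions for $j$ together with the construction of $C$: the variance $o_i$ of an edge at a node of $A$ is recorded in $C(A)$ as whether the (possibly glued) edge enters that node as $x$ or $x^*$, and this is exactly what $j$ preserves. The boundary type of $A$ is recorded in $C(A)$ as the boundary type of $C(A)$ after unbending, and this is preserved by $j$ as well. The main obstacle is the bookkeeping in this last step, namely verifying that the gluing at $\epsilon$-nodes is compatible with $j$ so that the reconstructed edge of $B$ has the correct orientation and object label; this compatibility is forced by the strict rule in the construction of $C$ that $\epsilon_x : x \otimes x^* \to I$ has its inputs ordered, together with the input-order preservation built into circuit-diagram isomorphism.
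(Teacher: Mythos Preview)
Your proposal is correct and follows essentially the same approach as the paper. The paper's proof is a brief sketch that highlights only the ``interesting case'' of a bending edge $e : \alpha^+ \to \beta^-$ (noting that since $i$ reflects no node the variance pattern, and hence the presence of a maximum, is preserved) and dismisses the converse with a single word; your argument fleshes out both directions along the same lines, in particular making explicit how the $\epsilon$-nodes and their adjacent $x,x^*$ edge pairs are matched up and glued back in the reverse direction.
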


Note that the former refers to isomorphism of teleological diagrams without rotations (definition \ref{def:teleological-circuit-isomorphism}), while the latter refers to isomorphism of circuit diagrams (definition \ref{def:circuit-diagram-isomorphism}).

\begin{proof}
	Given an isomorphism $i : A \cong_c B$ of teleological diagrams it is straightforward to construct an equivalence $C (i) : C (A) \cong_c C (B)$ of circuit diagrams, and conversely.
	The only interesting case is when there is an edge $e : \alpha^+ \to \beta^-$ in $A$ with a maximum at $(x (t), y (t))$.
	Since $i \not\reflects \alpha$ and $i \not\reflects \beta$ we also have $i (e) : i (\alpha)^+ \to i (\beta)^-$, and hence $i (e)$ also has a maximum at some $(x' (t'), y' (t'))$.
	These maxima correspond to fresh nodes in $C (A)$ and $C (B)$ respectively, and we construct $C (i)$ to associate them.
\end{proof}

\begin{proof}[of theorem \ref{main-theorem}]
	We begin by extending $w$ to a monoidal valuation $M (w) : M (\Sigma) \to U (\C)$ by setting $M (w) (x^*) = w (x)^*$, $M (w) (f^*) = w (f)^*$ and $M (w) (\epsilon_x) = \epsilon_{w (x)}$.
	By the Joyal-Street coherence theorem (theorem \ref{joyal-street-theorem}), we have a symmetric monoidal functor $G : \D_c (M (\Sigma)) \to \C$ such that $M (w) = G \circ v_{M (\Sigma)}$, unique up to unique monoidal natural isomorphism.
	
	We will prove that this $G$ is a teleological functor.
	More formally, recall from definition \ref{def-teleological-functor} that a teleological functor $F : \D_t (\Sigma) \to \C$ is a symmetric monoidal functor satisfying additional properties.
	We will prove that $G$ factors as
	\[ \D_c (M (\Sigma)) \overset{E}{\longrightarrow} \D_t (\Sigma) \overset{F}{\longrightarrow} \C \]
	and then that $F$ satisfies the conditions of a teleological functor.
	This amounts to proving that $G$ is defined on morphisms of $\D_t (\Sigma)$, which are $\cong_t$-equivalence classes, and that it respects duals and counits.
	Intuitively, this is possible because although $G$ is only a symmetric monoidal functor, the additional teleological structure is preserved by the valuation $v_{M (\Sigma)}$.
	
	First we prove that $G$ is defined on $\cong_t$-equivalence classes,
	Let $A$ and $B$ be teleological diagrams over $\Sigma$ with $A \cong_t B$.
	By lemma \ref{normal-form-lemma}, we have teleological diagrams $A' \cong_c A$ and $B' \cong_c B$ where $A'$ and $B'$ have the form depicted in figure \ref{fig:normal-form-lemma}.
	By lemma \ref{lemma-c-isomorphisms} we have $M (A') \cong_c M (A)$ and $M (B') \cong_c M (B)$.
	Since $G$ is defined on $\cong_c$-equivalence classes of circuit diagrams, it follows that $G (M (A')) = G (M (A))$ and $G (M (B')) = G (M (B))$.
	
	We next prove that $G (M (A')) = G (M (B'))$.
	Since $G$ is a symmetric monoidal functor, these expressions factorise as
	\[ G (M (A')) = (Y \otimes (\epsilon_{Y'} \circ (G (M (Q)) \otimes Y'{}^*))) \circ G (M (P)) : X \to Y \]
	\[ G (M (B')) = (Y \otimes (\epsilon_{X'} \circ (X' \otimes G (M (Q))^*))) \circ G (M (P)) : X \to Y \]
	where
	\begin{align*}
		X &= G (x_1)^{o_1} \otimes \cdots \otimes G (x_m)^{o_m} \\
		Y &= G (y_1)^{o'_1} \otimes \cdots \otimes G (y_n)^{o'_n} \\
		X' &= G (x'_1)^{o''_1} \otimes \cdots \otimes G (x'_{m'})^{o''_{m'}} \\
		Y' &= G (y'_1)^{o'''_1} \otimes \cdots \otimes G (y'_{n'})^{o'''_{n'}}
	\end{align*}
	using the notation of figure \ref{fig:normal-form-lemma}.
	The equality then follows from extranaturality of $\epsilon$:
	\begin{center} \begin{tikzpicture}[auto]
		\node (A) at (0, 3) {$G (X') \otimes G (Y')^*$};
		\node (B) at (8, 3) {$G (Y') \otimes G (Y')^*$};
		\node (C) at (0, 0) {$G (X') \otimes G (X')^*$};
		\node (D) at (8, 0) {$I$};
		\draw [->] (A) to node {$G (Q) \otimes G (Y')^*$} (B); \draw [->] (A) to node {$G (X') \otimes G (Q)^*$} (C);
		\draw [->] (B) to node {$\epsilon_{G (Y')}$} (D); \draw [->] (C) to node {$\epsilon_{G (X')}$} (D);
	\end{tikzpicture} \end{center}
	
	This completes the proof that $G$ is defined on $\cong_t$-equivalence classes.
	Consequently, we have defined a symmetric monoidal functor $F : \D_t (\Sigma) \to \C$ satisfying $w = F \circ v_\Sigma$.
	
	Next we must prove that remaining conditions of definition \ref{def-teleological-functor} to show that $F$ is a teleological functor.
	The first condition is that $F (X^*) = F (X)^*$, and similarly for dualisable morphisms.
	Since all objects of $\D_t (\Sigma)$ are generated by objects of the form $v_\Sigma (x)$, and $F$ is a monoidal functor, it suffices to prove it for those:
	\begin{align*}
		F (v_\Sigma (x)^*) &= G (v_{M (\Sigma)} (x^*)) &&\text{by definition of $F$} \\
		&= M (w) (x^*) &&\text{since $M (w) = G \circ v_{M (\Sigma)}$} \\
		&= w (x)^* &&\text{by definition of $M (w)$} \\
		&= F (v_\Sigma (x))^* &&\text{since $w = F \circ v_\Sigma$}
	\end{align*}
	Similarly, for diagrams of the form $v_\Sigma (f)$ for $f : \mor_d (\Sigma)$,
	\[ F (v_\Sigma (f)^*) = F (E (v_{M (\Sigma)} (f^*))) = G (v_{M (\Sigma)} (f^*)) = M (w) (f^*) = w (f)^* = F (v_\Sigma (f))^* \]
	
	In order to prove that $F (\epsilon_X) = \epsilon_{F (X)}$ for all objects $X$, we work by structural induction on $X$.
	If $X = v_\Sigma (x)$ then
	\begin{align*}
		F (\epsilon_{v_\Sigma (x)}) &= G (v_{M (\Sigma)} (\epsilon_x)) &&\text{by definition of $F$} \\
		&= M (w) (\epsilon_x) &&\text{since $M (w) = G \circ v_{M (\Sigma)}$} \\
		&= \epsilon_{w (x)} &&\text{by definition of $M (w)$} \\
		&= \epsilon_{F (v_\Sigma (x))} &&\text{since $w = F \circ v_\Sigma$}
	\end{align*}
	If $X = v_\Sigma (x)^*$ then
	\begin{align*}
		F (\epsilon_{v_\Sigma (x)^*}) &= F (\sigma_{v_\Sigma (x)^*, v_\Sigma (x)} \circ \epsilon_{v_\Sigma (x)}) &&\text{axiom of teleological categories} \\
		&= \sigma_{F (v_\Sigma (x)^*), F (v_\Sigma (x))} \circ F (\epsilon_{v_\Sigma (x)}) &&\text{since $F$ is a symmetric monoidal functor} \\
		&= \sigma_{F (v_\Sigma (x)^*), F (v_\Sigma (x))} \circ \epsilon_{F (v_\Sigma (x))} &&\text{by the previous case} \\
		&= \sigma_{F (v_\Sigma (x))^*, F (v_\Sigma (x))} \circ \epsilon_{F (v_\Sigma (x))} &&\text{since $F (v_\Sigma (x)^*) = F (v_\Sigma (x))^*$} \\
		&= \epsilon_{F (v_\Sigma (x))^*} &&\text{axiom of teleological categories} \\
		&= \epsilon_{F (v_\Sigma (x)^*)} &&\text{since $F (v_\Sigma (x)^*) = F (v_\Sigma (x))^*$}
	\end{align*}
	Similarly, if $X = X_1 \otimes X_2$ then
	\begin{align*}
		F (\epsilon_{X_1 \otimes X_2}) &= F ((X_1 \otimes \sigma_{X_1^*, X_2} \otimes X_2^*) \circ (\epsilon_{X_1} \otimes \epsilon_{X_2})) \\
		&= (F (X_1) \otimes \sigma_{F (X_1^*), F (X_2)} \otimes F (X_2^*)) \circ (F (\epsilon_{X_1}) \otimes F (\epsilon_{X_2})) \\
		&= (F (X_1) \otimes \sigma_{F (X_1)^*, F (X_2)} \otimes F (X_2)^*) \circ (\epsilon_{F (X_1)} \otimes \epsilon_{F (X_2)}) \\
		&= \epsilon_{F (X_1) \otimes F (X_2)} \\
		&= \epsilon_{F (X_1 \otimes X_2)}
	\end{align*}
	
	Finally we come to uniqueness.
	Let $F' : \D_t (\Sigma) \to \C$ be another teleological functor with $w = F' \circ v_\Sigma$.
	This extends to a symmetric monoidal functor
	\[ \D_c (M (\Sigma)) \overset{E}{\longrightarrow} \D_t (\Sigma) \xrightarrow{F'} \C \]
	which is equal to $G$ by uniqueness.
	Repeating the construction of $F$ from $G$, we conclude that $F = F'$.
\end{proof}

\bibliographystyle{plainurl}
\bibliography{\string~/Dropbox/Work/refs}

\end{document}